\title{\bfseries Compressive Sensing with a Multiple Convex Sets Domain}
\author{Hang Zhang, Afshin Abdi, and Faramarz Fekri \\
{\small School of Electrical and Computer Engineering, Georgia Institute of Technology, Atlanta, GA.}}
\date{}
\begin{document}
\maketitle 


\begin{abstract}
In this paper, we study a general framework 
for compressive sensing assuming the existence of the prior knowledge 
that $\bx^{\natural}$ belongs to the union of multiple convex sets, 
$\bx^{\natural} \in \bigcup_i \calC_i$. 
In fact, by proper choices of these convex sets in the above 
framework, the problem can be transformed to well known CS 
problems such as the phase retrieval, quantized compressive 
sensing, and model-based CS.
First we analyze the impact of this prior knowledge
on the minimum number of measurements $M$ to guarantee the uniqueness of the solution.
Then we formulate
a universal objective function for signal recovery, which 
is both computationally inexpensive and flexible. 
Then, an algorithm based on \emph{multiplicative weight update}
and \emph{proximal gradient descent} is proposed 
and analyzed for signal reconstruction. 
Finally, 
we investigate as to how we can improve the signal recovery 
by introducing regularizers into the objective function. 
\end{abstract}

\section{Introduction}
In the traditional \emph{compressive sensing} (CS) 
\cite{foucart2013mathematical, eldar2012compressed}, 
sparse signal $\bx$ is reconstructed via 
\begin{equation}
\label{eqn:trad_cs}
\min_{\bx}~\|\bx\|_1,~~\St~~\by = \bA\bx, 
\end{equation}
where $\by \in \RR^M$ denotes the measurement vector and 
$\bA \in \RR^{M\times N}$ is the measurement matrix. 
\par 
In this paper, we assume the existence of extra prior knowledge 
that $\bx$ lies in the union of some convex sets, 
$\bx\in \bigcup_{i=1}^L \mathcal{C}_i$, 
where $L$ denotes the number of constraint sets and $\calC_i$ is the $i$-th 
convex constraint set. 
Therefore, we now wish to solve
\begin{equation}
\begin{aligned}
\min_{\bx}\|\bx\|_1,\
\St~~\by = \bA\bx, \quad \bx\in \bigcup_{i=1}^L \mathcal{C}_i
\end{aligned}
\label{eqn:multiconvex_cs}
\end{equation}
This ill-posed inverse problem (i.e., given measurement $\by$, 
solving for $\bx$) turns out to be a rather general form of 
CS. 
For example,
setting $\bigcup \mathcal{C}_i = \RR^n$ simplifies our 
problem to the traditional CS problem. 
In the following, we will further show that 
by appropriate choices of these convex sets, Eq.~(\ref{eqn:multiconvex_cs})
can be transformed to the \emph{phase retrieval} \cite{candes2011phase, chen2015solving}, 
\emph{quantized compressive sensing} \cite{dai2009quantized}, 
or \emph{model-based CS} \cite{baraniuk2010model} problems.

\subsection{Relation with other problems} 
\paragraph{Phase retrieval}
Consider the noiseless phase retrieval problem in which the measurements are given by 
\[
y_i = \left|\langle \ba_i,~\bx\rangle\right|^2,~~1\leq i\leq l,  
\]
where $y_i$ is the $i$-th measurement and $\ba_i$ denotes 
the 
corresponding coefficients. Considering the first measurement, the constraint
$\sqrt{y_1} = |\langle  \ba_1,~\bx\rangle|$ can be represented 
via $\bx\in \calB_+^{(1)} \bigcup \calB_-^{(1)}$ where 
$\calB_+^{(1)} = \{\bx:~\langle \ba_1, \bx\rangle = \sqrt{y_1}\}$ and 
$\calB_-^{(1)} = \{\bx:~\langle \ba_1, \bx \rangle = -\sqrt{y_1}\}$. 
Following these steps, 
the constraints
$\{y_i = \langle \ba_i, \bx\rangle ^2\}_{i=1}^l$ 
can be transformed to
$\bx\in \bigcap_i \big(\calB_+^{(i)} \bigcup \calB_-^{(i)}\big)
= \bigcup_{j=1}^{2^l}\mathcal{C}_j$, 
for some appropriately defined $C_j$'s given by the 
intersection of different $\+B_{\pm}^{(i)}$.
Setting sensing matrix 
$\bA = \bZero$ will restore the phase retrieval to our setting. 
\paragraph{Quantized compressive sensing}
In this scenario, the measurements are quantized, i.e., 
\[
y_i = Q(\langle \ba_i,~\bx\rangle),~1\leq i \leq L,  
\]
where $Q(\cdot)$ is the quantizer. 
Since $Q^{-1}(\cdot)$ is an interval on real line, $\mathcal{C}_i$ would be a 
convex set and the quantized CS can be easily transformed to 
Eq.~(\ref{eqn:multiconvex_cs}).
\paragraph{Model-based compressive sensing}
These lines of works \cite{baraniuk2010model, duarte2011structured, silva2011blind}
are the most similar work to our model, where they consider
 
\[
\by = \bA\bx^{\natural},~~\bx^{\natural}\in \bigcup_{i}\calL_i.
\]
Here, $\calL_i$ is assumed to be a linear space whereas 
the only assumption we make on the models is being a convex set.
Hence, their model can be regarded as a special case of our problem.
\par  
In \cite{baraniuk2010model}, the author studied the minimum number of measurements
$M$ under different models, i.e., shape of $\calL_i$, and modified 
CoSaMP algorithms \cite{foucart2013mathematical} to reconstruct signal. 
In \cite{duarte2011structured}, the authors expanded the signal onto different basis
and transformed model-based CS to be 
block-sparse CS. In \cite{silva2011blind}, the author 
studied model-based CS with incomplete sensing matrix information and 
reformulated it as a matrix completion problem.


\subsection{Our contribution:}

\par 
\paragraph{Statistical Analysis}
We analyze the minimum number of measurements 
to ensure uniqueness of the solution. 
We first show that the conditions for the uniqueness
can be represented as 
$\min_{u\in E}\|\bA\bu\|_2 > 0$, for an appropriate set $E$. 
Assuming the entries of the sensing matrix $\bA$ are i.i.d. Gaussian, 
we relate the probability of uniqueness to the number of measurements, $M$. 
Our results show that depending on the structure of $\calC_i$'s, 
the number of measurements can be reduced significantly.

\paragraph{Optimization Algorithm}
We propose a novel formulation and the associated 
optimization algorithm to reconstruct 
the signal $\bx$. 
First, note that existing algorithms on e.g., model-based CS
are not applicable to our problem as they rely heavily on the structure of 
constraint sets. 
For example, a key idea in model-based CS is 
to consider expansion of $\bx$ onto the basis of each $\mathcal{C}_i$ and 
then rephrase the constraint as the block sparsity on the representation of $\bx$ 
on the union of bases. 
However, such an approach may add complicated constraints on the coefficients of $\bx$ 
in the new basis, 
as the sets $\calC_i$'s are not necessarily simple subspaces.
\par 
Note that although $\calC_i$'s are assumed to be convex, 
their union $\bigcup_{i}\calC_i$ is not necessarily a convex set, 
which makes the optimization problem Eq.~(\ref{eqn:multiconvex_cs}) hard to solve. 
By introducing an auxiliary variable, $\bp$, we convert the non-convex 
optimization problem to a biconvex problem. 
Using \emph{multiplicative weight update} \cite{arora2012multiplicative}
from online learning theory \cite{shalev2012online}, we 
design an algorithm with convergence speed of 
$\calO(T^{-1/2})$ to a local minimum. Further, 
we investigate improving the performance of the algorithm by
incorporating appropriate regularization.
Compared to the naive idea of solving $L$ simultaneous optimization problems

\[
\min_{\bx}~\|\bx\|_1,~\St~\by = \bA\bx,\quad \bx\in \calC_i,
\]
and choosing the best solution out of $L$ results,  
our method is computationally less-expensive and more flexible.

\section{System Model}
Let $\bA\in \RR^{M\times N}$ be the measurement matrix, and
consider the setup  
\begin{equation}
\label{eqn:cs_multi_set_model}
\by = \bA\bx^{\natural},\ \ \ \mbox{and} ~~~\bx^{\natural}\in \bigcup_{i=1}^L \calC_i,
\end{equation}
where $\bx^{\natural}$ is a $K$-sparse high-dimensional signal, 
$\by\in \RR^M$ is the measurement vector,  
and $\calC_i\subset\RR^N$, $i=1,2,\ldots,L$, is a convex set.
\par 
Due to the sparsity of $\bx^{\natural}$, 
we propose to reconstruct $\bx^{\natural}$ via 
\begin{equation}
\label{eqn:cs_mutli_set_optim}
\wh{\bx} = \argmin_{\bx}~\|\bx\|_1,\
~~\textup{s.t.}~~\by = \bA\bx,\
~~ \bx\in \bigcup_{i=1}^L \mathcal{C}_i,
\end{equation}
where $\wh{\bx}$ denotes the reconstructed signal.  
Let $\bd \defequal \wh{\bx} - \bx^{\natural}$ be the 
deviation of the reconstructed signal $\wh{\bx}$
from the true signal $\bx^{\natural}$.
In the following, we will study the inverse problem 
in Eq.~(\ref{eqn:cs_mutli_set_optim}) from two perspectives; 
the statistical and the computational aspects.

\section{Statistical Property}
In this section, we will find the minimum number of measurements 
$M$ to  $\wh{\bx} = \bx^{\natural}$, i.e., $\bd = \bZero$.

\begin{definition}
The tangent cone $\mathcal{T}_{\bx}$ for $\|\bx\|_1$ 
is defined as \cite{chandrasekaran2012convex}
\[
\mathcal{T}_{\bx} \triangleq \{\be:~\
\|\bx + t\be\|_1 \leq \|\bx\|_1,~\exists~t \geq 0\}.
\]	
\end{definition}
Geometric interpretation of $\mathcal{T}_{\bx}$ 
is that it contains all directions that lead to smaller $\|\cdot\|_1$ originating 
from $\bx$. In the following analysis,
we use $\mathcal{T}$ as a compact notation for $\mathcal{T}_{\bx^{\natural}}$.
Easily we can prove that $\bd\in \calT$.

\begin{definition}
The Gaussian width $\omega(\cdot)$ associated with 
set $U$ is defined as
$
\omega(U) \defequal \Expc \sup_{\bx \in U} \la \bg,~\bx\ra,
\bg \sim \normdist(\bZero, \bI)$,  \cite{gordon1988milman}.
\end{definition}

\par
Define cone $\wt{\calC}_{i,j}$ as 
\[
\wt{\calC}_{i,j} \defequal \ 
\left\{\bz~\big|~\bz = t(\bx_1 - \bx_2),~\exists~t > 0,~\bx_1 \in \calC_i, \bx_2 \in \
\calC_{j}\right\},
\]
which denotes the cone consisting of all vectors $\bz$ that are parallel 
with $\bx_1 - \bx_2$, $\bx_1\in \calC_i$, and $\bx_2\in \calC_j$.
Then we define event $\calE$ as
 
\[
\calE \defequal \
\left\{\bigcup_{i,j}\left(\Null(\bA)\bigcap \calT \bigcap  \wt{\calC}_{i, j}\right) = \{\bZero\}\right\}.
\]

\begin{restatable}{lemma}{uniquesollemma}
\label{lemma:uniquesol}
We can guarantee the correct recovery of $\bx$, i.e., $\wh{\bx} = \bx^{\natural}$, iff we have event $\calE$ to be satisfied.
\end{restatable}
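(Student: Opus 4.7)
The plan is to establish the equivalence by translating competing optimal points into difference vectors inside $\Null(\bA)\cap\calT\cap\wt{\calC}_{i,j}$, and vice versa.

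For the sufficiency direction, I would argue by contradiction: suppose $\calE$ holds yet some $\bx'\neq\bx^{\natural}$ satisfies $\bA\bx'=\by$, $\bx'\in\bigcup_i\calC_i$, and $\|\bx'\|_1\leq\|\bx^{\natural}\|_1$. Setting $\bd\defequal\bx'-\bx^{\natural}\neq\bZero$, I would verify three membership statements in order: $\bd\in\Null(\bA)$ from $\bA\bd=\bA\bx'-\bA\bx^{\natural}=\bZero$; $\bd\in\calT$ by taking $t=1$ in the tangent-cone definition, since $\|\bx^{\natural}+\bd\|_1=\|\bx'\|_1\leq\|\bx^{\natural}\|_1$; and $\bd\in\wt{\calC}_{i,j}$ by taking $t=1$, $\bx_1=\bx'\in\calC_i$, $\bx_2=\bx^{\natural}\in\calC_j$ for whichever indices $i,j$ make both memberships hold. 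This contradicts $\calE$, forcing $\hat{\bx}=\bx^{\natural}$.

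For necessity, I would argue the contrapositive. Assume $\calE$ fails, so some non-zero $\bd$ lies in $\Null(\bA)\cap\calT\cap\wt{\calC}_{i_0,j_0}$ for a pair $(i_0,j_0)$, with representation $\bd=t(\bx_1-\bx_2)$, $t>0$, $\bx_1\in\calC_{i_0}$, $\bx_2\in\calC_{j_0}$. The plan is to exhibit a competing feasible point of no larger $\ell_1$ norm than $\bx^{\natural}$. The candidate is a suitably scaled perturbation $\bx^{\natural}+s\bd$ (so that $\bA$ kills the perturbation automatically) which we want to place inside some $\calC_k$; the natural choice is $s=1/t$, making the candidate equal to $\bx^{\natural}+(\bx_1-\bx_2)$. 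Using that $\bx^{\natural}$ itself lies in some $\calC_{j^*}$, I would specialize the argument to the pair with $j_0=j^*$ and $\bx_2=\bx^{\natural}$ (selecting a representative of $\bd\in\wt{\calC}_{i_0,j^*}$ with this property, which is possible since $\wt{\calC}_{i,j}$ is a cone). Then the candidate collapses to $\bx_1\in\calC_{i_0}$, which is automatically feasible; the tangent-cone membership combined with convexity of the $\ell_1$-ball then yields $\|\bx_1\|_1\leq\|\bx^{\natural}\|_1$.

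The main obstacle is the necessity direction, specifically ensuring that when $\calE$ fails via some pair $(i_0,j_0)$ the constructed alternative is genuinely feasible in $\bigcup_k\calC_k$. The crux is aligning $\bx_2$ with $\bx^{\natural}$ in the representation $\bd=t(\bx_1-\bx_2)$; doing so is permitted because the conical structure of $\wt{\calC}_{i,j}$ allows rescaling, but one must also confirm that the norm inequality survives this rescaling, which is where the convexity of the sub-level set $\{\bu:\|\bu\|_1\leq\|\bx^{\natural}\|_1\}$ enters to promote "some $t>0$ works" in the tangent-cone definition to "$t=1$ works" for the particular rescaled direction. Modulo this bookkeeping, both implications follow from routine linearity and cone arguments.
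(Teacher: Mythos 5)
Your sufficiency direction is exactly the paper's argument and is fine: if some feasible $\bx'\neq\bx^{\natural}$ attains $\|\bx'\|_1\leq\|\bx^{\natural}\|_1$, then $\bd=\bx'-\bx^{\natural}$ is a nonzero element of $\Null(\bA)\bigcap\calT\bigcap\wt{\calC}_{i,j}$ (with $t=1$, $\bx_1=\bx'$, $\bx_2=\bx^{\natural}$), contradicting $\calE$.

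The necessity direction, however, contains a genuine gap, and it is precisely at the step you flagged as the crux. Membership $\bd\in\wt{\calC}_{i_0,j_0}$ only guarantees that $\bd=t(\bx_1-\bx_2)$ for \emph{some} $\bx_1\in\calC_{i_0}$, $\bx_2\in\calC_{j_0}$, $t>0$; the fact that $\wt{\calC}_{i_0,j_0}$ is a cone lets you rescale the vector $\bd$, but it does not let you re-anchor its representation so that $\bx_2=\bx^{\natural}$, nor does it let you assume the offending pair satisfies $j_0=j^{*}$. The cone generated by $\calC_{i_0}-\bx^{\natural}$ is in general strictly smaller than $\wt{\calC}_{i_0,j^{*}}$: take $\calC_{i_0}=\{\ba\}$ a singleton and $\calC_{j^{*}}$ a ball centered at $\bx^{\natural}$; then the differences $\ba-\bx_2$ sweep out a full-dimensional cone, while the directions from $\bx^{\natural}$ into $\calC_{i_0}$ form a single ray, so a nonzero $\bd$ in the intersection need not correspond to any feasible point $\bx^{\natural}+s\bd\in\bigcup_k\calC_k$. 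There is a second, related problem: the tangent-cone membership gives $\|\bx^{\natural}+t'\bd\|_1\leq\|\bx^{\natural}\|_1$ for \emph{some} $t'\geq 0$, and convexity of the sublevel set $\{\bu:\|\bu\|_1\leq\|\bx^{\natural}\|_1\}$ only propagates this to step sizes \emph{smaller} than $t'$, whereas the step $s=1/t$ you need to land on $\bx_1$ may be larger, so the norm inequality can fail exactly when feasibility holds. To be fair, the paper's own proof of this direction simply asserts that $\bx^{\natural}+t\be$ ``satisfies the constraints'' without justification, i.e., it hides the same gap; but your proposed repair via cone rescaling does not close it, so as written the ``only if'' implication is not established.
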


\begin{proof}
This proof is fundamentally same as 
\cite{hzhangEnergy, chandrasekaran2012convex}.
First we prove that $\calE$ leads to $\wh{\bx} \neq \bx^{\natural}$.
Provided $\bd\defequal\wh{\bx} -\bx^{\natural} \neq \bZero$, we then have 
a $\wh{\bx} \neq \bx^{\natural}$ such that
$\left\|\wh{\bx}\right\|_1 \leq \left\|\bx^{\natural}\right\|_1$.
Setting $\be\parallel \bd$,
we hav a non-zero 
$\be\in\Null(\bA)\bigcap \calT \bigcap (\bigcup_{i,j}\wt{\calC}_{i, j})$,
which violates $\calE$. 
\par 
Then we prove that $\wh{\bx} \neq \bx^{\natural}$ implies 
$\calE$. 
Assume that there exists non-zero $\be\in\Null(\bA)\bigcap \calT \bigcap (\bigcup_{i,j}\wt{\calC}_{i, j})$. 
We can show that signal $\bx^{\natural} + t\be$, where $t$ is some positive 
constant such that $\|\bx^{\natural} + t\be\|_1 \leq \|\bx^{\natural}\|_1$, satisfying constraints described 
by Eq.~(\ref{eqn:cs_multi_set_model}). This implies 
that $\bd = t\be \neq \bZero$ and the wrong recovery of 
$\bx^{\natural}$.  
\end{proof}

Since a direct computation of the probability of event 
$\calE$ can be difficult, 
we analyze the following equivalent event, 

\[
\min_{\bx\in \calT \bigcap \left(\bigcup_{i,j}\wt{\calC}_{ij}\right)}
\|\bA\bx\|_2 > 0.
\]
For the simplicity of analysis, we assume that the entries 
$A_{i,j}$
of $\bA$ are i.i.d. normal $\normdist(0, 1)$.  
Using Gordon's escape from mesh theorem \cite{gordon1988milman}, 
we obtain the following result that relates $\Prob(\calE)$
with the number of measurements $M$.
\par \vsp 
\begin{restatable}{theorem}{uniqthm}
\label{thm:cs_multi_set-uniq}
Let $a_M = \mathbb{E}\|\bg\|_2$, where $\bg\in \mathcal{N}(\boldsymbol{0},\bI_{M\times M})$, 
and $\omega(\cdot)$ denotes the Gaussian width. 
Provided that $a_M\geq \omega(\calT)$ and 
$(1-2\epsilon)a_M \geq \omega(\wt{\calC}_{ij})$ for
$1\leq i, j \leq L$ and $\epsilon > 0$, we have

\[
\begin{aligned}
\textup{Pr}(\calE)\
\geq 1 - \bigg(\underbrace{\textup{Pr}\bracket{\min_{\bu\in \mathcal{T}^c \setminus \{\boldsymbol{0}\} }\
\|\bA\bu\|_2 > 0}}_{\calP_1}\vcap \underbrace{\textup{Pr}\bracket{\
\min_{\boldsymbol{u}\in \bigcap \wt{\mathcal{C}}^c_i \setminus \{\boldsymbol{0}\}}\
\|\bA\bu\|_2 > 0}}_{\calP_2}\bigg),
\end{aligned}
\]
where $a \vcap b$ denotes the minimum of $a$ and $b$, 
and $\calP_1$ and $\calP_2$ can be bounded as 

\[
\begin{aligned}	
&\calP_1 \leq   1 \vcap \exp\bracket{-\frac{\left(a_M -\omega(\calT)\right)^2}{2}}\\
&\calP_2 \leq 1 \vcap \frac{3}{2}\exp\left(-\frac{\epsilon ^2a_M^2}{2}\right)
+ \sum_{i \leq j} \exp\bracket{-\dfrac{\bracket{(1-2\epsilon)a_M -\
\omega(\wt{\calC}_{ij})}^2}{2}}.
\end{aligned}
\]
\end{restatable}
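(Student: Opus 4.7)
The plan is to reduce $\Pr(\calE)$ to two escape-from-mesh probabilities, one for the tangent cone $\calT$ alone and one for the union $\bigcup_{i,j}\wt{\calC}_{ij}$ alone, and then to invoke Gordon's escape from mesh theorem on each piece, combined with standard Gaussian concentration to absorb the union bound. The $\vcap$ in the statement strongly suggests that the two bounds arise from two \emph{independent} containment arguments, each giving a full bound on $\Pr(\calE^c)$, and we keep the smaller of the two.

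\textbf{Step 1 (containment).} By Lemma~\ref{lemma:uniquesol}, $\calE$ is equivalent to
$\min\bigl\{\|\bA\bu\|_2 : \bu \in \calT \cap (\bigcup_{i,j}\wt{\calC}_{ij}) \cap \mathbb{S}^{N-1}\bigr\} > 0$.
Because the feasible set is contained in both $\calT\cap \mathbb{S}^{N-1}$ and $(\bigcup_{i,j}\wt{\calC}_{ij})\cap \mathbb{S}^{N-1}$ separately, the failure event is contained in the intersection of the two analogous failure events, so $\Pr(\calE^c)$ is bounded by the minimum of $\Pr(\min_{\calT\cap\mathbb{S}}\|\bA\bu\|_2=0)$ and $\Pr(\min_{\bigcup\wt{\calC}_{ij}\cap\mathbb{S}}\|\bA\bu\|_2=0)$. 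This yields the outer $\vcap$ structure.

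\textbf{Step 2 (single cone via Gordon).} For the $\calT$-piece I would apply Gordon's escape from mesh theorem directly: under $a_M \geq \omega(\calT)$, one has $\Pr\bigl(\min_{\bu\in\calT\cap\mathbb{S}^{N-1}}\|\bA\bu\|_2 \leq a_M-\omega(\calT)-t\bigr)\leq \exp(-t^2/2)$; taking $t=a_M-\omega(\calT)$ forces the min to exceed $0$ and produces the first bound $\exp\bigl(-(a_M-\omega(\calT))^2/2\bigr)$.

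\textbf{Step 3 (union of cones).} For the $\bigcup\wt{\calC}_{ij}$-piece the naive route is a union bound over the $\binom{L}{2}$ cones $\wt{\calC}_{ij}$. To obtain the stated $(1-2\epsilon)a_M$ in the exponent rather than $a_M$, I would decouple the two sources of randomness in Gordon's inequality: first condition on the high-probability event that $\|\bg\|_2$ concentrates near $a_M$, say $\{\|\bg\|_2\geq (1-\epsilon)a_M\}$, whose complement has Gaussian measure at most $\tfrac{3}{2}\exp(-\epsilon^2 a_M^2/2)$ by standard norm-concentration for a Gaussian vector (this is the first summand of $\calP_2$). On the good event, invoking Gordon's theorem on each $\wt{\calC}_{ij}$ with slack $\epsilon a_M$ yields a deviation bound of $\exp\bigl(-((1-2\epsilon)a_M-\omega(\wt{\calC}_{ij}))^2/2\bigr)$, and summing over $i\le j$ gives the second summand.

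\textbf{Main obstacle.} The delicate step is Step~3: naively union-bounding Gordon's inequality across $\bigcup\wt{\calC}_{ij}$ loses the needed slack, so one must split into a ``concentration'' event for $\|\bg\|_2$ and a ``deviation'' event for each cone and optimize the split; the appearance of $(1-2\epsilon)$ rather than $(1-\epsilon)$ indicates the $\epsilon$ is used \emph{twice}, once for the norm of $\bg$ and once inside Gordon's chaining inequality, so the bookkeeping of which $\epsilon a_M$ contribution is subtracted from $a_M$ at each stage is where the proof actually has content.
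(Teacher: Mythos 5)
Your Steps 1 and 2 are exactly the paper's argument: the containment of the failure event in each of the two larger failure events gives the outer $\vcap$, and the $\calT$-term follows from the single-cone escape-from-mesh corollary (Corollary 3.3 of Chandrasekaran et al.). The gap is in Step 3. As written, ``condition on $\{\|\bg\|_2\geq(1-\epsilon)a_M\}$ and then invoke Gordon's theorem on each $\wt{\calC}_{ij}$'' is not a legitimate operation: in the escape theorem the randomness is the matrix $\bA$, and the vector $\bg$ exists only inside Gordon's proof as the comparison process, so there is no such event in the probability space of the black-box statement, and the Gaussian minimax comparison does not survive conditioning on a correlated event. To make your plan rigorous one has to reopen the proof of Gordon's Theorem 4.1, as the paper does: first pass from $\min_{\bu}\|\bA\bu\|_2$ over the union of spherical sections $\bigcup_{i\leq j}\big(\SS^{n-1}_2\cap\wt{\calC}_{ij}\big)$ to the surrogate process $\|\bg\|_2+\sum_j u_j h_j$ via Gordon's comparison Lemma 3.1 (this step already consumes one $\epsilon a_M$ of slack through an auxiliary scalar Gaussian, at cost $\tfrac12 e^{-\epsilon^2a_M^2/2}$), and only \emph{then} split the surrogate into the lower tail of $\|\bg\|_2$ (the second $\epsilon a_M$, costing the remaining $e^{-\epsilon^2a_M^2/2}$) and a per-cone deviation of $\sup_{\bu}\la \bu,\bh\ra$ around $\omega(\wt{\calC}_{ij})$, union bounded over $i\leq j$. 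The ordering --- comparison first, union bound afterwards on the one-sided Gaussian process --- is the actual content; your single conditioning event of mass $\tfrac32 e^{-\epsilon^2 a_M^2/2}$ conflates two distinct concentration costs paid at two different stages, so the bookkeeping you flag as delicate is not actually carried out.

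Separately, the ``main obstacle'' you identify is not an obstacle. Since $\big\{\Null(\bA)\cap\SS^{n-1}_2\cap\bigcup_{i,j}\wt{\calC}_{ij}\neq\emptyset\big\}=\bigcup_{i\leq j}\big\{\Null(\bA)\cap\SS^{n-1}_2\cap\wt{\calC}_{ij}\neq\emptyset\big\}$ (using $\wt{\calC}_{ji}=-\wt{\calC}_{ij}$ and the symmetry of the null space), a plain union bound followed by the single-cone escape corollary applied to each $\wt{\calC}_{ij}$ yields $\calP_2\leq\sum_{i\leq j}\exp\big(-(a_M-\omega(\wt{\calC}_{ij}))^2/2\big)$, which under the hypothesis $(1-2\epsilon)a_M\geq\omega(\wt{\calC}_{ij})$ is stronger than, and hence implies, the stated bound --- no $\epsilon$-splitting or conditioning is needed at all. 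So the slack you were trying to engineer is precisely where your sketch goes astray, and the simplest correct completion of your Step 3 is more elementary than both your route and the paper's.
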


Thm.~\ref{thm:cs_multi_set-uniq} 
links the probability of correct recovery of 
Eq.~(\ref{eqn:cs_mutli_set_optim}) with the number of measurements $M$, 
and the ``size" of constraint set.
Detailed explanation is given as the following. 
To ensure high-probability of $\calE$, we would like 
to $\calP_1 \vcap \calP_2$ to approach zero, 
which requires large value of $a_M$. Meanwhile, 
$a_M$ is a monotonically increasing function of the sensor number $M$.
Hence, we can obtain the minimum sensor number $M$ 
requirement by unique recovery via investigating $a_M$. 

\begin{remark}
Notice that $\calP_1$ is associated with the 
descent cone $\calT$ of the optimization function, namely, 
$\|\bx\|_1$, while $\calP_2$ is associated with the 
prior knowledge $\bx\in \bigcup_i \calC_i$.
Thm.~\ref{thm:cs_multi_set-uniq} implies that
event $\calE$ (uniqueness) holds with higher 
probability than the traditional CS due to the 
extra constraint 
$\bx \in \bigcup_{i}\mathcal{C}_i$. 
If we fix $\textup{Pr}(\calE)$, 
we can separately calculate the corresponding $M$ with 
and without the constraint $\bx \in \bigcup_{i}\mathcal{C}_i$. The 
difference $\Delta M$ would indicate the savings in the number of measurements 
due to the 
additional structure $\bx\in \bigcup_i \mathcal{C}_i$ 
over the traditional CS. 
\end{remark}

One simple example is attached below 
to illustrate the improvement brought by 
Thm.~\ref{thm:cs_multi_set-uniq}.

\begin{example}
\label{example:support_set}
Consider the constraint set 
\[
\calC_i = \{(0, \cdots, 0, x_{i}, \cdots, x_{K + i},~0, \cdots, 0)\},
\]
where $1\leq i \leq N-K$.
We study the 
asymptotic behavior of Thm.~\ref{thm:cs_multi_set-uniq}
when $N$ is of order $\calO(K^c)$, where $c > 1$ is constant.
In the sequel we will show that Thm.~\ref{thm:cs_multi_set-uniq} gives us the 
order $M = \calO(K)$ to ensure solution uniqueness 
as $K$ approaches infinity, which gives us the same bound 
as shown in \cite{baraniuk2010model} and suggests 
the tightness of our result.
\par 
Setting $\epsilon = 1/{4}$, we can bound $\calP_2$ as
\[ 
\calP_2 \leq \
\frac{3}{2}\exp\left(-\frac{a_M^2}{32}\right) + \
\dfrac{N^2}{2} \exp\left(-\frac{(a_M - 2a_{2K})^2}{8}\right),
\]
provided $a_M \geq 2a_K$. 
With the relation $\frac{M}{\sqrt{M+1}} \leq a_M \leq \sqrt{M}$ 
\cite{gordon1988milman, chandrasekaran2012convex} 
and setting $M = 3K$,
we have 
\[
\calP_2 \leq  c_1 \exp(-c_2 K) + c_3 N^2\exp(-c_4 K),
\]
where $c_1, c_2, c_3, c_4 > 0$ are some positive constants.
Since $N = \calO(K^c)$, we can see $\calP_2$ shrinks to zero 
as $K$ approaches infinity, which implies the solution uniqueness. 
\par 
Comparing with the traditional CS theory 
without prior knowledge $x \in \bigcup_i \calC_i$, 
our bound reduces the number of measurements 
from $M = \calO(K\log N/K) = \calO(K\log K)$ to 
$M = \calO(K)$.  
\end{example}

\section{Computational Algorithm}
Apart from the statistical property, another important aspect of 
Eq.~(\ref{eqn:cs_mutli_set_optim}) is to design an efficient algorithm. 
One naive idea is to consider and solve $L$ separate optimization problems
\[
\wh{\bx}^{(i)} = \argmin_{\bx}~\|\bx\|_1,~\St~~\by = \bA\bx,
~~\bx\in \calC_i,
\] 
and then selecting the best one, i.e., the sparsest reconstructed signal
among all $\wh{\bx}^{(i)}$'s.
However, this method has two drawbacks: 

\begin{itemize}
\item 
It requires solving $L$ separate optimization problems, which in many 
applications might be prohibitively large and difficult to handle, but the 
proposed method is based on one single optimization procedure.

\item 
It is inflexible. For example, some prior 
knowledge of which $\calC_i$ 
the true signal $\bx^{\natural}$ is more likely to reside might be available.
The above method cannot incorporate such priors.
\end{itemize}

\par 
To overcome the above drawbacks, we $(i)$ reformulate Eq.~(\ref{eqn:cs_mutli_set_optim})
to a more tractable objective function, 
and $(ii)$ propose a computationally efficient algorithm to solve it.
In the following, we assume that $\bx$ is bounded in the sense that 
for a constant $R$, $\|\bx\|_2 \leq R$.

\subsection{Reformulation of the objective function}
We introduce an auxiliary variable $\bp$ 
and rewrite the Lagrangian form in 
Eq.~(\ref{eqn:cs_mutli_set_optim}) as  

\begin{equation}
\label{eqn:cs_multi_set_model_unify}
\min_{\bx} \min_{\bp\in \Delta_L}\sum_{i}~p_i\
\bigg(\|\bx\|_1 + \wt{\Ind}(\bx\in \mathcal{C}_i)
+\dfrac{\lambda_1}{2}\|\by - \bA\bx\|^2_2 + \dfrac{\lambda_2\|\bx\|^2_2}{2}
\bigg),
\end{equation}
where $\Delta_L$ is the simplex $\{p_i \geq 0,~\sum_i p_i = 1\}$,
$\wt{\Ind}(\cdot)$ is the truncated indicator function, which is 
$0$ when its argument is true and 
is some large finite number $C$ otherwise, 
and $\lambda_1, \lambda_2 > 0$ are the Lagrange multipliers. 
The term $\|\by - \bA\bx\|^2_2$ is used to penalize for 
the constraint $\by = \bA\bx$ while 
$\|\bx\|^2_2$ corresponds to the energy constraint 
$\|\bx\|_2 \leq R$. 
It can be easily shown that solving Eq.~(\ref{eqn:cs_multi_set_model_unify}) 
for large enough $C$ ensures 
$\bx \in \bigcup_{i}^L \mathcal{C}_i$.

\begin{algorithm}[H]
\caption{Non-convex Proximal Multiplicative Weighting Algorithm}
\label{alg:non_cvx_prox_mw}
\begin{algorithmic}[1]
\Statex \textbullet~
\textbf{Initialization}: Initialize all variables with uniform weight 
$p_i^{(0)} = L^{-1}$ and $\bx^{(0)} = \bZero$.
\Statex \textbullet~\
\textbf{For time $t =1$ to $T$:}
We update $p_i^{(t+1)}$ and $\bx^{(t)}$ as 
\begin{align}
&p_{i}^{(t+1)} \propto p_i^{(t)} e^{-\eta_p^{(t)} f_i(\bx^{(t)})} \
\label{eqn:non_cvx_prox_p_update}\\
&\bx^{(t+1)} = \prox_{\eta^{(t)}_w \|\cdot\|_1}\
\bigg[\bx^{(t)} - \eta^{(t)}_{x} \sum_{i} p_i^{(t)} \notag \left(\nabla_x h_i(\bx^{(t)}) + \lambda_1 \bA^{\rmt}(\bA\bx^{(t)} - \by)\
+ \lambda_2 \bx^{(t)} \right)\bigg] \label{eqn:non_cvx_prox_x_update},
\end{align}
where $p_i^{(t)}$ denotes the $i$th element of 
$\bp^{(t)}$, 
and the proximal operator $\prox_{\|\cdot\|_1}(\bx)$ is defined as 
$\argmin_{\bz} \left(\|\bz\|_1 + \frac{1}{2}\|\bz-\bx\|^2_2\right)$
\cite{beck2009fast}.

\Statex \textbullet ~\
\textbf{Output}:
Calculate the average value 
$\bar{\bp} = \frac{\sum_t \bp^{(t)}}{T}$ and 
value $\bar{\bx} = \frac{\sum_t \bx^{(t)}}{T}$.
Then output $\wh{\bx}$ by projecting 
$\bar{\bx}$ onto the set of $\bigcup_i \calC_i$. 

\end{algorithmic}
\end{algorithm}

\par 
Apart from the universality, 
our formulation has the following benefits:
\begin{itemize}
\item 
It is memory efficient. Compared with the 
naive idea that needs to store $L$ different 
$\wh{\bx}^{(i)}$, our method only needs to 
track one $\wh{\bx}$ and one redundant variable $\bp$. 
This reduces the storing memory from 
$\calO(NL)$ to $\calO(N+L)$.
\item 
It is very flexible. We can easily 
adjust to the case that $\bx$ belongs to the
intersection, i.e., $\bx\in \bigcap_i \mathcal{C}_i$ via
modifying
$\min_{\bp\in \Delta_L}$ in 
Eq.~(\ref{eqn:cs_multi_set_model_unify}) to 
$\max_{\bp\in \Delta_L}$.
\end{itemize}
Besides, 
to the best of our knowledge, this is the first time 
that such a formulation 
Eq.~(\ref{eqn:cs_multi_set_model_unify}) is proposed. 
In the following, we will focus on the computational methods. 
Note that the difficulties in solving 
Eq.~(\ref{eqn:cs_multi_set_model_unify})
are due to two aspects: 
\begin{itemize}
\item Optimization over $\bp$:
Although classical methods to minimize over $\bp$ with fixed $\bx$, 
e.g., \emph{alternative minimization} and ADMM \cite{boyd2011alternating},
can calculate local minimum efficiently (due to the 
bi-convexity of Eq.~(\ref{eqn:cs_multi_set_model_unify}), they 
can be easily trapped in the local-minima. 
This is because some entries 
in $\bp$ can be set to zero and hence $\bx$ 
will be kept away from the corresponding set $\mathcal{C}_i$
thereafter.
To handle this problem, we propose to 
use \emph{multiplicative weight update} \cite{arora2012multiplicative} and 
update $\bp$ with the relation 
$\bp^{(t+1)} \propto \bp^{(t)}e^{-\eta^{(t)}_p f_i(\bx)}$, where 
$\bp^{(t)}$ denotes $\bp$'s value in the $t$th iteration. 
This update relation avoids the sudden change of $\bp^{(t)}$'s
entries from non-zero to zero, which could have forced 
$\bx^{(t)}$ being trapped in a local minimum. 
\item 
Optimization over $\bx$:  
Due to the non-smoothness of $\wt{\Ind}(\bx\in \calC_i)$ and
$\|\bx\|_1$ in 
Eq.~(\ref{eqn:cs_multi_set_model_unify}) 
and the difficulties in calculating their sub-gradients, 
directly minimizing 
Eq.~(\ref{eqn:cs_multi_set_model_unify}) 
would be computationally prohibitive.
We propose to first approximate 
$\wt{\Ind}(\bx\in \calC_i)$ with a smooth function $h_i(\bx)$ 
and update $\bx^{(t)}$ with the 
relation Eq.~(\ref{eqn:non_cvx_prox_x_update}) used in 
\emph{proximal gradient descent} 
\cite{beck2017first}.

\end{itemize}

\begin{definition}[$L_g$-strongly smooth \cite{beck2017first}]
Function $g(\cdot):\mathcal{X}\mapsto \RR$ is $L_g$-strongly smooth 
iff 
\[
g(\by) \leq g(\bx) + \la \nabla g(\bx), ~\by-\bx\ra + \
\dfrac{L_g}{2}\|\bx - \by\|^2_2,  
\]
for all $\bx,~\by$ in the domain $\mathcal{X}$.
\end{definition}

\subsection{Non-convex Proximal Multiplicative Weighting Algorithm}
Here we directly approximate the truncated indicator function 
$\wt{\Ind}(\bx\in \calC_i)$
by $L_{h, i}$ strongly-smooth convex penalty functions $h_i(\bx)$, 
which may be different for different shapes of convex sets. 
For example, consider the convex set $\calC_i$ in 
Example.~\ref{example:support_set}.
We may define 
$
h_i(\bx) = \sum_{j\notin [i, i+K]}^{N}x_j^2, 
$ 
where $[a, b]$ denotes the region from $a$ to $b$.  
While for the set $\{\bx:\la \ba, \bx\ra \leq b\}$, 
we may instead adopt the modified log-barrier function
with a finite value. 
Then Eq.~(\ref{eqn:cs_multi_set_model_unify})
can be rewritten as 
\begin{equation}
\label{eqn:cs_multl_set_model_unify_smooth}
\min_{\bp}\min_{\bx}~\
~\mathcal{L}(\bp,\bx) \
\triangleq 
\sum_{i=1}^L p_i f_i(\bx),
\end{equation}
where $f_i(\bx)$ is defined as 

\[
f_i(\bx) \defequal
\|\bx\|_1 +  h_i(\bx) + \
\dfrac{\lambda_1}{2}\|\by - \bA\bx\|_2^2 + \
\dfrac{\lambda_2}{2}\|\bx\|_2^2.
\]
Hence, the optimization problem in 
(\ref{eqn:cs_multl_set_model_unify_smooth}) 
can be solved via Alg.~\ref{alg:non_cvx_prox_mw}

\begin{lemma}
$h(\bx)\defequal \sum_i p_i h_i(\bx) + \
\frac{\lambda_1\|\by - \bA\bx\|^2_2}{2} + \
\frac{\lambda_2 \|\bx\|^2_2}{2}$ is 
strongly-smooth with some positive constant denoted as $L_h$. 
\end{lemma}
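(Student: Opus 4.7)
The plan is to verify strong smoothness for each summand of $h(\bx)$ individually and then combine them using the elementary fact that the sum of $L_1$-strongly smooth and $L_2$-strongly smooth functions is $(L_1+L_2)$-strongly smooth (this follows immediately by adding the defining inequalities). Since $h_i(\bx)$ is assumed to be $L_{h,i}$-strongly smooth by construction, the weighted combination $\sum_i p_i h_i(\bx)$ will be handled as Step~1 below; the two quadratic penalty terms, being twice-continuously differentiable with constant Hessians, admit direct bounds in Step~2.

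First I would handle $\sum_i p_i h_i(\bx)$. Writing the defining inequality for each $h_i$, multiplying by $p_i \ge 0$, and summing over $i$ (using $\sum_i p_i \nabla h_i = \nabla(\sum_i p_i h_i)$, valid for any fixed $\bp$), one obtains that $\sum_i p_i h_i$ is strongly smooth with constant $\sum_i p_i L_{h,i}$. Since $\bp \in \Delta_L$, this weighted sum is bounded by $L_h^{(0)} \defequal \max_{1\le i\le L} L_{h,i}$, a $\bp$-independent constant.

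Next I would bound the smoothness constants of the quadratic terms. For $g_1(\bx) = \tfrac{\lambda_1}{2}\|\by - \bA\bx\|_2^2$, the Hessian is the constant matrix $\lambda_1 \bA^{\rmt}\bA$, so a second-order Taylor expansion (exact, because $g_1$ is quadratic) gives strong smoothness with constant $\lambda_1 \sigma_{\max}^2(\bA)$, where $\sigma_{\max}(\bA)$ is the spectral norm of $\bA$. Likewise, $g_2(\bx) = \tfrac{\lambda_2}{2}\|\bx\|_2^2$ has Hessian $\lambda_2 \bI$, yielding strong smoothness with constant $\lambda_2$.

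Combining the three contributions via additivity, the constant
\[
L_h \defequal \max_{1\le i\le L} L_{h,i} + \lambda_1 \sigma_{\max}^2(\bA) + \lambda_2
\]
works uniformly over $\bp\in\Delta_L$, which proves the lemma. There is no real obstacle here; the only subtlety worth highlighting is that $L_h$ must be independent of $\bp$ so that it can be used as a step-size parameter inside Alg.~\ref{alg:non_cvx_prox_mw}, and the bound $\sum_i p_i L_{h,i} \le \max_i L_{h,i}$ is precisely what provides this uniformity.
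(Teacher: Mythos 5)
Your proposal is correct and follows essentially the same route as the paper's own proof: decompose $h$ into the weighted sum $\sum_i p_i h_i$ plus the two quadratic penalty terms, invoke the smoothness of each piece, and combine using nonnegativity of the $p_i$. The one substantive difference is in how the constant is aggregated, and there your version is the more careful one. You compute the quadratic contributions explicitly ($\lambda_1\sigma_{\max}^2(\bA)$ and $\lambda_2$ from the constant Hessians) and bound $\sum_i p_i L_{h,i}\le \max_i L_{h,i}$ over the simplex, arriving at the $\bp$-independent constant $L_h=\max_i L_{h,i}+\lambda_1\sigma_{\max}^2(\bA)+\lambda_2$. The paper instead states the combined inequality with $L_h=\min_i L_{h,i}$ (and with the inequality written in the $\geq$ direction), which is a slip: a valid strong-smoothness constant for a nonnegatively weighted sum must dominate the weighted sum of the individual constants, so the aggregation via the maximum (or the sum) is the right one. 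The uniformity in $\bp$ that you highlight is exactly what is needed for the step-size condition $\eta_x\le L_h^{-1}$ in Alg.~\ref{alg:non_cvx_prox_mw}, so no gap remains in your argument.
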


\begin{proof}
First, we can check that 
$\frac{\lambda_1\|\by - \bA\bx\|^2_2}{2} + \
\frac{\lambda_2 \|\bx\|^2_2}{2}$ is strongly-smooth. 
Denote the corresponding parameter as $L_{h, 0}$. 
Meanwhile, due to the construction of $h_i(\bx)$, 
it is strongly-smooth for every $i$. Since $p_i$ is non-negative 
for every $i$,  
we can easily prove the following 
inequality
\[
h(\bx_1) \geq h(\bx_2) + \
\la \nabla h(\bx_2), \bx_1 - \bx_2\ra + \
\dfrac{L_h}{2}\|\bx_1 -\bx_2\|^2_2, 
\]
where $L_h$ is defined as $\min(L_{h, i})$, $0\leq i \leq L$.
\end{proof}
Then we have the following theorem.
\par

\begin{restatable}{theorem}{noncvxproxlocalconvergethm}
\label{thm:noncvxproxlocalconverge} 
Let $\eta_x^{(t)} = \eta_x \leq L_h^{-1}$, 
and $\eta^{(t)}_p = R_f^{-1}\sqrt{2\log L/T}$,
where $|f_i(\cdot)|\leq R_f$, 
$\|\bx\|_2 \leq R$. Then 
we have 
\[
\begin{aligned}
&\bigg|\dfrac{\min_{\bp}\sum_t \calL(\bp, \bx^{(t)})}{T} - \dfrac{\sum_t \calL(\bp^{(t)}, \bx^{(t)})}{T}\bigg| \\
+~&
\bigg| \dfrac{\sum_t \calL(\bp^{(t)}, \bx^{(t)})}{T}- \
\dfrac{\min_{\bx}\sum_t \calL(\bp^{(t)}, \bx)}{T}\bigg|\\
\leq~&\frac{2R^2}{\eta_x T} + R_f\sqrt{\frac{\log L}{T}},
\end{aligned}
\]
where $T$ denotes the number of iterations. 
\end{restatable}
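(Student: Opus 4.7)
The plan is to interpret the stated inequality as the sum of two one-sided regret terms — a $\bp$-regret and an $\bx$-regret — and to bound each separately using standard online-learning tools. Both quantities inside the absolute values are in fact non-negative: plugging $\bp = \bp^{(t)}$ into $\min_{\bp}\sum_t \calL(\bp, \bx^{(t)})$ shows the first difference is $\leq 0$, and similarly for the second. So I can drop the absolute values and treat them as ordinary regrets against the best fixed $\bp \in \Delta_L$ and the best fixed $\bx$ with $\|\bx\|_2 \leq R$, respectively.

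For the $\bp$-regret, note that $\calL(\bp, \bx^{(t)}) = \la \bp, \boldsymbol{f}^{(t)} \ra$ is linear in $\bp$, with loss vector $\boldsymbol{f}^{(t)} = (f_1(\bx^{(t)}), \ldots, f_L(\bx^{(t)}))$ satisfying $\|\boldsymbol{f}^{(t)}\|_{\infty} \leq R_f$, and the update~\eqref{eqn:non_cvx_prox_p_update} is exactly the Hedge rule on the simplex. I would deploy the standard potential-function argument — track $\Phi^{(t)} = \log \sum_i w_i^{(t)}$, bound its per-step change via $e^{-\eta_p x} \leq 1 - \eta_p x + (\eta_p x)^2/2$ on $|x| \leq R_f$, telescope across $t$, and compare against the best-expert indicator — to obtain
\[
\textstyle \sum_t \calL(\bp^{(t)}, \bx^{(t)}) - \min_{\bp \in \Delta_L} \sum_t \calL(\bp, \bx^{(t)}) \leq \frac{\log L}{\eta_p} + \frac{\eta_p R_f^2 T}{2}.
\]
Substituting $\eta_p = R_f^{-1}\sqrt{2\log L/T}$ and dividing by $T$ yields the $R_f \sqrt{\log L/T}$ contribution.

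For the $\bx$-regret, write $g_t(\bx) := \calL(\bp^{(t)}, \bx) = \|\bx\|_1 + h^{(t)}(\bx)$ where $h^{(t)}(\bx) = \sum_i p_i^{(t)} h_i(\bx) + \tfrac{\lambda_1}{2}\|\by - \bA\bx\|_2^2 + \tfrac{\lambda_2}{2}\|\bx\|_2^2$ is $L_h$-smooth with the \emph{same} constant for every $t$ by the preceding lemma. The update~\eqref{eqn:non_cvx_prox_x_update} is then online composite proximal-gradient on $g_t$ with step $\eta_x \leq L_h^{-1}$. I would establish the per-round prox-telescope inequality
\[
g_t(\bx^{(t+1)}) - g_t(\bx^*) \leq \frac{\|\bx^{(t)} - \bx^*\|_2^2 - \|\bx^{(t+1)} - \bx^*\|_2^2}{2\eta_x}
\]
by combining first-order optimality of the prox map (which gives $\la \eta_x^{-1}(\bx^{(t)} - \bx^{(t+1)}) - \nabla h^{(t)}(\bx^{(t)}), \bx^* - \bx^{(t+1)} \ra + \|\bx^*\|_1 - \|\bx^{(t+1)}\|_1 \geq 0$), convexity of $h^{(t)}$ at $\bx^*$, and the descent lemma for $h^{(t)}$ which at $\eta_x \leq L_h^{-1}$ produces exactly the residual $\tfrac{1}{2\eta_x}\|\bx^{(t+1)} - \bx^{(t)}\|_2^2$ that cancels the corresponding prox term. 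Telescoping and using $\|\bx^{(t)} - \bx^*\|_2 \leq 2R$ bounds the absolute regret by $2R^2/\eta_x$; an index shift (or the use of $\bar\bx$ via convexity) converts ``regret at $\bx^{(t+1)}$'' into ``regret at $\bx^{(t)}$'', delivering the $2R^2/(\eta_x T)$ piece after division by $T$.

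The main obstacle is the $\bx$-regret step: obtaining a \emph{clean} telescoping bound despite the loss $g_t$ changing from round to round. This works only because the non-smooth piece $\|\bx\|_1$ is identical across rounds (so the prox-optimality inequalities combine into a perfect telescope), all $h^{(t)}$ share a single smoothness constant $L_h$, and the step size $\eta_x \leq L_h^{-1}$ is precisely the threshold at which the descent-lemma remainder is absorbed. Everything else — the Hedge bound, summing the two non-negative regret pieces, and plugging in the stated step sizes — is routine.
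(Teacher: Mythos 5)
Your proposal is correct and follows essentially the same route as the paper: the same decomposition into a $\bp$-regret bounded by the multiplicative-weights/Hedge potential argument (the paper's Lemma~\ref{lemma:noncvx_prox_T1bound}) and an $\bx$-regret bounded by the proximal-gradient telescoping with $\eta_x \leq L_h^{-1}$ and $\|\bx^{(t)}-\bx^{*}\|_2\leq 2R$ (the paper's Lemma~\ref{lemma:noncvx_prox_T2bound}), yielding the same two terms after dividing by $T$. Your treatment of the absolute values (asserting non-negativity of the two regret sums) is exactly as loose as the paper's own, so it does not constitute a departure from, or a gap relative to, the published argument.
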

Due to the difficulties in analyzing the global 
optimum, 
in Theorem \ref{thm:noncvxproxlocalconverge} 
we focus on analyzing the closeness between the
average value $\frac{\sum_t \calL(\bp^{(t)}, \bx^{(t)})}{T}$
to its local minimum. 
The first term denotes the gap 
between average value $\frac{\sum_t \calL(\bp^{(t)}, \bx^{(t)})}{T}$
and the optimal value of $\calL(\bp, \bx)$ 
with $\bx^{(t)}$ being fixed. Similarly, 
the second term represents the gap 
with $\bp^{(t)}$ being fixed. 
As $T\rightarrow\infty$, the sum of these two bounds
approaches to zero at the rate of 
$\calO(T^{-1/2})$. 

\par 
Moreover note that setting $\eta^{(t)}_p$ requires the 
oracle knowledge of $T$, which is impractical. 
This artifacts can easily be fixed by the doubling 
trick \cite[\S 2.3.1]{shalev2012online}. 
In addition, we have proved the following theorem. 
\vsp 
\begin{restatable}{theorem}{noncvxproxsmoothgradthm}
\label{thm:noncvxproxsmoothgrad}
Let $\eta^{(t)}_w \leq L_h^{-1}$,
where  $|f_i(\cdot)|\leq R_f$. Then
we have 
\[
\dfrac{1}{T}\sum_t \|\bx^{(t+1)} - \bx^{(t)}\|^2_2 \leq \
\dfrac{2\calL(\bp^{(0)}, \bx^{(0)})}{L_h T} + \
\dfrac{4 R_f^2 \sum_t \eta^{(t)}_p }{L_h T}. 
\]
\end{restatable}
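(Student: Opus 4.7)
The plan is to treat this as a standard proximal-gradient-style descent analysis, but with an extra drift term coming from the multiplicative-weight update on $\bp$. The quantity $\|\bx^{(t+1)} - \bx^{(t)}\|_2^2$ plays the role of the squared gradient mapping, and the target inequality should emerge as ``initial Lyapunov energy $+$ cumulative perturbation from the $\bp$-side,'' divided by the step budget $L_h T$.

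First I would invoke the preceding lemma: with $\bp^{(t)}$ held fixed, the smooth summand $\sum_i p_i^{(t)} h_i(\bx) + \tfrac{\lambda_1}{2}\|\by - \bA\bx\|_2^2 + \tfrac{\lambda_2}{2}\|\bx\|_2^2$ of $\calL(\bp^{(t)},\bx)$ is $L_h$-strongly smooth, while the non-smooth part collapses to $\|\bx\|_1$ (using $\sum_i p_i^{(t)} = 1$). Hence the $\bx$-update in Alg.~\ref{alg:non_cvx_prox_mw} is exactly one step of proximal gradient on $\calL(\bp^{(t)},\cdot)$ with step size $\eta_w^{(t)} \leq L_h^{-1}$, and the standard proximal descent lemma yields
\[
\calL(\bp^{(t)}, \bx^{(t+1)}) \leq \calL(\bp^{(t)}, \bx^{(t)}) - \tfrac{L_h}{2}\|\bx^{(t+1)} - \bx^{(t)}\|_2^2 .
\]

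Next I would telescope. Adding and subtracting $\calL(\bp^{(t+1)}, \bx^{(t+1)})$ on the right and summing over $t = 0, \ldots, T-1$, the telescoping pieces collapse to $\calL(\bp^{(0)}, \bx^{(0)}) - \calL(\bp^{(T)}, \bx^{(T)})$; discarding the non-negative tail $\calL(\bp^{(T)}, \bx^{(T)}) \geq 0$ (each $f_i$ is a sum of non-negative terms) leaves
\[
\tfrac{L_h}{2}\sum_t \|\bx^{(t+1)} - \bx^{(t)}\|_2^2 \leq \calL(\bp^{(0)}, \bx^{(0)}) + \sum_t \bigl[\calL(\bp^{(t+1)}, \bx^{(t+1)}) - \calL(\bp^{(t)}, \bx^{(t+1)})\bigr] .
\]

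The hard step is controlling each drift increment $\sum_i (p_i^{(t+1)} - p_i^{(t)})\, f_i(\bx^{(t+1)})$ by $2 R_f^2\, \eta_p^{(t)}$ \emph{with} the crucial $\eta_p^{(t)}$ factor; the crude bound $\|\bp^{(t+1)} - \bp^{(t)}\|_1 \leq 2$ is $\calO(1)$ per step and would give nothing useful. To extract the $\eta_p^{(t)}$, I would first-order expand $p_i^{(t+1)} = p_i^{(t)} e^{-\eta_p^{(t)} f_i(\bx^{(t)})}/Z^{(t)}$ using $|e^{-x} - (1-x)| \leq \tfrac{x^2}{2} e^{|x|}$, control the normalizer $Z^{(t)} = \sum_j p_j^{(t)} e^{-\eta_p^{(t)} f_j(\bx^{(t)})}$ via $|f_i| \leq R_f$ and $\sum_j p_j^{(t)} = 1$, and thereby obtain $\|\bp^{(t+1)} - \bp^{(t)}\|_1 \leq 2 R_f\, \eta_p^{(t)}$ after absorbing the $\calO((\eta_p^{(t)})^2)$ residuals. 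H\"older's inequality then gives the per-step drift bound $2 R_f^2\, \eta_p^{(t)}$. Substituting into the previous display and dividing by $L_h T/2$ produces the stated inequality with the constants $2/(L_h T)$ and $4/(L_h T)$ in the right places.
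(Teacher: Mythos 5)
Your overall architecture coincides with the paper's: the sufficient-decrease bound $\calL(\bp^{(t)},\bx^{(t+1)}) \le \calL(\bp^{(t)},\bx^{(t)}) - \tfrac{L_h}{2}\|\bx^{(t+1)}-\bx^{(t)}\|_2^2$ from the proximal step with $\eta^{(t)}_w \le L_h^{-1}$, the add-and-subtract of $\calL(\bp^{(t+1)},\bx^{(t+1)})$, telescoping and discarding the non-negative tail, and the reduction of everything to the per-step drift bound $\la \bp^{(t+1)}-\bp^{(t)},\,\bff(\bx^{(t+1)})\ra \le 2R_f^2\,\eta_p^{(t)}$. The one place you deviate is how you control $\|\bp^{(t+1)}-\bp^{(t)}\|_1$, and that is where the gap lies.

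Your Taylor-expansion route does not deliver the exact bound $\|\bp^{(t+1)}-\bp^{(t)}\|_1 \le 2R_f\,\eta_p^{(t)}$ that the stated constants require. Expanding $p_i^{(t+1)} = p_i^{(t)}e^{-\eta_p^{(t)} f_i(\bx^{(t)})}/Z^{(t)}$ with $Z^{(t)}\in[e^{-\eta_p^{(t)}R_f},\,e^{\eta_p^{(t)}R_f}]$ gives, even when done carefully, something of the form $2R_f\,\eta_p^{(t)}\,e^{c\,\eta_p^{(t)}R_f}$, i.e.\ the first-order bound inflated by a factor strictly larger than one, or equivalently $2R_f\,\eta_p^{(t)}$ plus $\calO\bracket{(\eta_p^{(t)}R_f)^2}$ residuals. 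You claim these residuals can be ``absorbed,'' but the target inequality has no slack to absorb them into: the theorem's second term is exactly $4R_f^2\sum_t\eta_p^{(t)}/(L_hT)$, which needs the per-step constant $2$ uniformly in $\eta_p^{(t)}$, with no smallness assumption on $\eta_p^{(t)}R_f$ anywhere in the statement. As written, your argument proves the right rate but only a weakened version of the stated inequality (an inflated constant or extra $\calO\bracket{\sum_t(\eta_p^{(t)})^2}$ terms). The paper sidesteps this with a self-bounding argument: Pinsker's inequality gives $\|\bp^{(t+1)}-\bp^{(t)}\|_1^2 \le 2D_{KL}(\bp^{(t+1)}||\bp^{(t)})$, and a short lemma (based on $\sum_i p_i x_i \ge \prod_i x_i^{p_i}$) gives $D_{KL}(\bp^{(t+1)}||\bp^{(t)}) \le \eta_p^{(t)}\la \bp^{(t)}-\bp^{(t+1)},\,\bff(\bx^{(t)})\ra \le \eta_p^{(t)}R_f\|\bp^{(t+1)}-\bp^{(t)}\|_1$; cancelling one factor of $\|\bp^{(t+1)}-\bp^{(t)}\|_1$ yields exactly $\|\bp^{(t+1)}-\bp^{(t)}\|_1 \le 2\eta_p^{(t)}R_f$ with no higher-order correction. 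If you replace your expansion step with this Pinsker--KL argument, the rest of your proof goes through as written.
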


This theorem discusses the convergence speed with respect to
the $\bx^{(t)}$ update. Due to the $\calO(T^{-1})$ of the 
first term on the right side of the above inequality, 
the best convergence rate we can obtain is 
$\calO(T^{-1})$, which is achievable by 
$\eta_p^{(t)} \propto t^{-2}$. However, using fixed learning rate $\eta_p$ 
as in Thm.~\ref{thm:noncvxproxlocalconverge}
would result in the convergence rate of $\calO(T^{-1/2})$.

\subsection{Regularization for $\mathbf{p}$}
\label{subsec:regular_p}
Another drawback of the naive method is that
they cannot exploit the prior knowledge. For example, 
if we know that the true $\bx^{\natural}$ is most likely to reside in set $\calC_1$.
With the naive method, 
we cannot use this information but separately solve 
Eq.~(\ref{eqn:cs_mutli_set_optim}) for all $L$ sets. 
In the sequel, we will show that 
our formulation Eq.~(\ref{eqn:cs_multi_set_model_unify})
can incorporate such prior knowledge by adding 
regularizers for $\bp$, and bring certain 
performance improvement.
\par 
Note that we can interpret $p_i$, the $i$-th element of $\bp$  
in Eq.~(\ref{eqn:cs_multl_set_model_unify_smooth}) as 
the likelihood of $\bx^{\natural}\in \calC_i$. 
Without any prior knowledge about
which set $\calC_i$ the true signal $\bx^{\natural}$ resides, 
variable $\bp$ is uniformly distributed among
all possible distributions $\Delta_L$.
When certain prior information is available, its 
distribution is skewed towards certain distributions, 
namely $\bq$. 

\par 
In this paper, we adopt $\|\cdot\|^2_2$ to regularize
$\bp$ towards $\bq$ and write the modified function 
$\+{LR}(\bp, \bx)$ as
\[
\+{LR}(\bp, \bx) = \calL(\bp, \bx) + \
\dfrac{\lambda_3}{2}\|\bp - \bq\|^2_2,
\]
where $\lambda_3 > 0$ is a constant used to 
balance $\calL(\bp, \bx)$ and $\frac{1}{2}\|\bp - \bq\|^2_2$. 
Based on different applications, other norms such as KL-divergence or
$l_1$ norm can be used as the regularizer. 
\par
Then we substitute the update equation Eq.~(\ref{eqn:non_cvx_prox_p_update})
as 

\[
\bp^{(t+1)} = \Proj_{\Delta}\left(\bp^{(t)} - \eta^{(t)}_p \bg^{(t)}\right), 
\]
where 
$\bg^{(t)} = \nabla_{\bp^{(t)}}\+{LR}(\bp, \bx^{(t)})=\bff(\bx^{(t )}) + \lambda_3(\bp^{(t)} - \bq)$, 
and
$\bff(\bx^{(t)})$ denotes the vector whose 
$i$th element is $f_i(\bx^{(t)})$. 
Similar as above, we obtain the following theorems. 

\par \vsp 
\begin{restatable}{theorem}{noncvxproxregularsaddlethm}
\label{thm:noncvxproxregularsaddle}
Provided that $\|\bg^{(t)}\|_2 \leq R_g$, 
by setting $\eta_x^{(t)} = \eta_x \leq L_h^{-1}$
and $\eta_p^{(t)} = (\lambda t)^{-1}$ 
we conclude that

\[
\begin{aligned}
&\bigg|\dfrac{\min_{\bp}\sum_t \+{LR}(\bp, \bx^{(t)})}{T} - \dfrac{\sum_t \+{LR}(\bp^{(t)}, \bx^{(t)})}{T}\bigg| \\
+ ~& \bigg| \dfrac{\sum_t \+{LR}(\bp^{(t)}, \bx^{(t)})}{T}- \
\dfrac{\min_{\bx}\sum_t \+{LR}(\bp^{(t)}, \bx)}{T}\bigg| \\
\leq~&\dfrac{ R_g^2 \log T}{2\lambda_3 T} + \dfrac{R^2}{2\eta_x T},
\end{aligned}
\]	

\end{restatable}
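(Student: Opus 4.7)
The plan is to split the bound into two parts corresponding to the two absolute-value terms: the first measures the online regret of the $\bp$ update against the best fixed $\bp$ in hindsight (with $\bx^{(t)}$'s fixed), and the second measures the regret of the $\bx$ update against the best fixed $\bx$ (with $\bp^{(t)}$'s fixed). These two regrets can be analyzed independently, since each only depends on its own update rule and the sequence of opponents' plays. Non-negativity of each term (convexity of $\bp \mapsto \+{LR}(\bp,\bx^{(t)})$ and $\bx \mapsto \+{LR}(\bp^{(t)},\bx)$) lets me drop the absolute values.

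For the $\bp$ term, the key observation is that the regularizer $\frac{\lambda_3}{2}\|\bp-\bq\|^2_2$ makes $\+{LR}(\cdot,\bx^{(t)})$ a $\lambda_3$-strongly convex function of $\bp$, with (sub)gradients bounded by $R_g$. This is exactly the setting of projected online gradient descent on strongly convex losses, so with the prescribed step size $\eta_p^{(t)}=(\lambda_3 t)^{-1}$ the standard Hazan--Agarwal--Kale-type argument applies: starting from $\|\bp^{(t+1)}-\bp^*\|^2_2 \le \|\bp^{(t)} - \eta_p^{(t)}\bg^{(t)}-\bp^*\|^2_2$, expanding, using $\lambda_3$-strong convexity to replace $\la \bg^{(t)},\bp^{(t)}-\bp^*\ra$ with $\+{LR}(\bp^{(t)},\bx^{(t)})-\+{LR}(\bp^*,\bx^{(t)})+\frac{\lambda_3}{2}\|\bp^{(t)}-\bp^*\|^2_2$, and then telescoping with the choice $\eta_p^{(t)}=1/(\lambda_3 t)$, the $\|\bp^{(t)}-\bp^*\|^2_2$ terms collapse cleanly and the residual is $\sum_{t=1}^T \eta_p^{(t)} R_g^2/2 \le \frac{R_g^2(1+\log T)}{2\lambda_3}$. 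Dividing by $T$ yields the $\frac{R_g^2 \log T}{2\lambda_3 T}$ term.

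For the $\bx$ term, note that the $\frac{\lambda_3}{2}\|\bp-\bq\|^2_2$ piece is constant in $\bx$ and drops out, so the objective reduces to $\calL(\bp^{(t)},\bx) = h_t(\bx)+\|\bx\|_1$ where $h_t(\bx) = \sum_i p_i^{(t)} h_i(\bx) + \frac{\lambda_1}{2}\|\by-\bA\bx\|^2_2 + \frac{\lambda_2}{2}\|\bx\|^2_2$ is convex and $L_h$-strongly smooth by the preceding lemma. For the proximal gradient update, I would invoke the standard one-step descent inequality: by $L_h$-smoothness of $h_t$ and the first-order optimality of the prox operator, for any comparator $\bx^*$,
\[
\calL(\bp^{(t)},\bx^{(t+1)}) \le \calL(\bp^{(t)},\bx^*) + \frac{1}{2\eta_x}\bigl(\|\bx^{(t)}-\bx^*\|^2_2 - \|\bx^{(t+1)}-\bx^*\|^2_2\bigr),
\]
which uses $\eta_x \le L_h^{-1}$ to absorb the $\frac{L_h}{2}\|\bx^{(t+1)}-\bx^{(t)}\|^2_2$ term from smoothness against the $-\frac{1}{2\eta_x}\|\bx^{(t+1)}-\bx^{(t)}\|^2_2$ from the prox identity. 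Summing in $t$, the right-hand side telescopes to $\frac{\|\bx^{(1)}-\bx^*\|^2_2}{2\eta_x}\le\frac{R^2}{2\eta_x}$ (since $\bx^{(0)}=\bZero$ and $\|\bx^*\|_2\le R$), giving the $\frac{R^2}{2\eta_x T}$ term after division by $T$.

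The main obstacle I anticipate is cosmetic rather than structural: matching indices so that the LHS of the theorem uses $\bx^{(t)}$ rather than $\bx^{(t+1)}$ in the $\bx$-regret term, which requires either a one-step shift in the averaging or an extra inequality using $\calL(\bp^{(t+1)},\bx^{(t+1)})\approx \calL(\bp^{(t)},\bx^{(t)})$ (bounded via the algorithm's contraction). A secondary subtlety is verifying that the gradient bound $\|\bg^{(t)}\|_2\le R_g$ is meaningful given that $\bg^{(t)}=\bff(\bx^{(t)}) + \lambda_3(\bp^{(t)}-\bq)$ combines the $f_i$-values with the regularizer gradient; but since $|f_i|\le R_f$ and $\bp^{(t)}\in\Delta_L$, this is a boundedness assumption I can simply absorb into $R_g$.
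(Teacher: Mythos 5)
Your proposal matches the paper's proof essentially step for step: the same decomposition into a $\bp$-regret term and an $\bx$-regret term, the same strongly-convex projected online gradient argument with $\eta_p^{(t)} = (\lambda_3 t)^{-1}$ (expand $\|\bp^{(t+1)}-\bp^{*}\|_2^2$ via projection contraction, use the $\lambda_3$-quadratic structure, telescope) giving the $\frac{R_g^2 \log T}{2\lambda_3 T}$ term, and the same proximal-gradient telescoping for the $\bx$ term, which the paper handles by simply reusing its earlier lemma. Your observation that $\bx^{(0)}=\bZero$ gives $\|\bx^{(0)}-\bx^{*}\|_2\leq R$ even recovers the stated constant $\frac{R^2}{2\eta_x T}$ directly, whereas the paper's cited lemma bounds that distance by $2R$.
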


Comparing with Thm.~\ref{thm:noncvxproxlocalconverge}, 
Thm.~\ref{thm:noncvxproxregularsaddle} implies that
the regularizers improve the optimal 
rate from $\calO(T^{-1/2})$ to $\calO(\log T/T)$.
Therefore, our framework can exploit the prior information 
to improve the recovery performance whereas the naive
method of iterative computation fails to achieve as such.

\par  \vsp 
\begin{restatable}{theorem}{noncvxproxregularconvergethm}
\label{thm:noncvxproxregularconverge}
Provided that $\|\bg^{(t)}\|_2 \leq R_g$, by setting $\eta^{(t)}_w = \eta_x \leq L_h^{-1}$ we conclude that 
\[
\begin{aligned}
\dfrac{1}{T}\sum_t \|\bx^{(t+1)} - \bx^{(t)}\|^2_2 
\leq \frac{2\+{LR}(\bp^{(0)}, \bx^{(0)})}{L_h T} + \dfrac{2  R_g^2}{L_h T}\
\sum_{t}\left(\eta^{(t)}_p + \frac{\lambda_3 \left(\eta^{(t)}_p\right)^2}{2}\right).
\end{aligned} 
\]	
\end{restatable}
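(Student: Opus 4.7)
The plan is to extend the argument underlying Thm.~\ref{thm:noncvxproxsmoothgrad} to the regularized Lyapunov function $\+{LR}(\bp^{(t)},\bx^{(t)})$ by tracking how it changes under one iteration of the modified algorithm. The one-step difference naturally splits as
\[
\+{LR}(\bp^{(t+1)},\bx^{(t+1)}) - \+{LR}(\bp^{(t)},\bx^{(t)}) = \bigl[\+{LR}(\bp^{(t+1)},\bx^{(t)}) - \+{LR}(\bp^{(t)},\bx^{(t)})\bigr] + \bigl[\+{LR}(\bp^{(t+1)},\bx^{(t+1)}) - \+{LR}(\bp^{(t+1)},\bx^{(t)})\bigr],
\]
a $\bp$-update piece and an $\bx$-update piece. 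After summing over $t=0,\ldots,T-1$, telescoping, and invoking $\+{LR}(\bp^{(T)},\bx^{(T)}) \geq 0$ (each $f_i$ is non-negative and so is the quadratic regularizer), the task reduces to per-step upper bounds on the two pieces, which will then give the two terms on the right-hand side.

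For the $\bp$-update, note that for fixed $\bx^{(t)}$ the map $\bp\mapsto \+{LR}(\bp,\bx^{(t)})$ is an affine function of $\bp$ plus the $\lambda_3$-smooth quadratic $\frac{\lambda_3}{2}\|\bp-\bq\|^2_2$; hence it is globally $\lambda_3$-smooth in $\bp$ with gradient $\bg^{(t)}$ at $\bp^{(t)}$. The descent inequality then reads
\[
\+{LR}(\bp^{(t+1)},\bx^{(t)}) - \+{LR}(\bp^{(t)},\bx^{(t)}) \leq \la \bg^{(t)},\bp^{(t+1)}-\bp^{(t)}\ra + \frac{\lambda_3}{2}\|\bp^{(t+1)}-\bp^{(t)}\|^2_2.
\]
Because $\bp^{(t)}\in\Delta_L$ and projection onto a convex set is non-expansive, $\|\bp^{(t+1)}-\bp^{(t)}\|_2 \leq \eta_p^{(t)}\|\bg^{(t)}\|_2 \leq \eta_p^{(t)} R_g$. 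Combining this with Cauchy--Schwarz on the inner product produces the per-step bound $R_g^2\bigl(\eta_p^{(t)} + \frac{\lambda_3 (\eta_p^{(t)})^2}{2}\bigr)$, which is precisely the summand appearing under the sum in the target inequality.

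For the $\bx$-update, with $\bp=\bp^{(t+1)}$ held fixed, $\+{LR}(\bp^{(t+1)},\cdot)$ equals $\|\bx\|_1 + h(\bx)$ up to a constant in $\bx$, where $h$ is the $L_h$-strongly smooth convex function identified in the preceding lemma. Since $\eta_x\leq L_h^{-1}$, the standard sufficient-decrease inequality for proximal gradient descent on a composite objective with smooth part of modulus $L_h$ yields
\[
\+{LR}(\bp^{(t+1)},\bx^{(t+1)}) - \+{LR}(\bp^{(t+1)},\bx^{(t)}) \leq -\frac{L_h}{2}\|\bx^{(t+1)}-\bx^{(t)}\|^2_2.
\]
Adding the two per-step estimates, telescoping from $t=0$ to $T-1$, moving the squared-distance terms to the left, and dividing through by $L_h T/2$ produces exactly the claimed bound. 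The only delicate point is the bookkeeping that cleanly separates the $\bp$- and $\bx$-updates so that each smoothness argument is applied to a single variable at a time; once that is done, the regularizer enters through the $\lambda_3$-smoothness in $\bp$ and combines with the gradient-step bound $\eta_p^{(t)} R_g$ to deliver the second-order term $\frac{\lambda_3 (\eta_p^{(t)})^2}{2}$ in the conclusion, which is the main departure from Thm.~\ref{thm:noncvxproxsmoothgrad}.
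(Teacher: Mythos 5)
Your overall plan (per-iteration decrease of $\+{LR}$, telescoping with $\+{LR}(\bp^{(T)},\bx^{(T)})\geq 0$, and the per-step bound $R_g^2\bigl(\eta_p^{(t)}+\tfrac{\lambda_3(\eta_p^{(t)})^2}{2}\bigr)$ for the $\bp$-piece obtained from the exact quadratic expansion in $\bp$, Cauchy--Schwarz, and non-expansiveness of $\Proj_\Delta$) is the same as the paper's, and the $\bp$-piece is handled correctly. The gap is in the $\bx$-piece, and it comes from the order in which you split the one-step difference. You update $\bp$ first and then invoke the proximal-gradient sufficient-decrease inequality for the objective with the \emph{new} weights,
\[
\+{LR}(\bp^{(t+1)},\bx^{(t+1)}) - \+{LR}(\bp^{(t+1)},\bx^{(t)}) \leq -\tfrac{L_h}{2}\|\bx^{(t+1)}-\bx^{(t)}\|^2_2 .
\]
But the algorithm's $\bx$-step, Eq.~(\ref{eqn:non_cvx_prox_x_update}), is a proximal gradient step taken with the gradient $\sum_i p_i^{(t)}\nabla h_i(\bx^{(t)})+\lambda_1\bA^{\rmt}(\bA\bx^{(t)}-\by)+\lambda_2\bx^{(t)}$, i.e.\ with the \emph{old} weights $\bp^{(t)}$ (the regularization in Section~\ref{subsec:regular_p} only replaces the $\bp$-update). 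The standard sufficient-decrease argument therefore applies to $\+{LR}(\bp^{(t)},\cdot)$, not to $\+{LR}(\bp^{(t+1)},\cdot)$; applying it with $\bp^{(t+1)}$ leaves an uncontrolled cross term $\la \sum_i (p_i^{(t+1)}-p_i^{(t)})\nabla h_i(\bx^{(t)}),\,\bx^{(t+1)}-\bx^{(t)}\ra$, which cannot be absorbed into the stated bound without extra assumptions (e.g.\ bounds on $\|\nabla h_i\|_2$) and would change the constants.

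The fix is to reverse the decomposition, which is exactly what the paper does: first use the valid sufficient decrease at the old weights,
\[
\|\bx^{(t+1)}-\bx^{(t)}\|^2_2 \leq \tfrac{2}{L_h}\bigl[\+{LR}(\bp^{(t)},\bx^{(t)})-\+{LR}(\bp^{(t)},\bx^{(t+1)})\bigr],
\]
and then split the bracket as $\bigl[\+{LR}(\bp^{(t)},\bx^{(t)})-\+{LR}(\bp^{(t+1)},\bx^{(t+1)})\bigr]+\bigl[\+{LR}(\bp^{(t+1)},\bx^{(t+1)})-\+{LR}(\bp^{(t)},\bx^{(t+1)})\bigr]$. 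The first bracket telescopes to at most $\+{LR}(\bp^{(0)},\bx^{(0)})$, and the second is the $\bp$-change evaluated at the new iterate $\bx^{(t+1)}$, which your own $\bp$-argument bounds by $R_g^2\bigl(\eta_p^{(t)}+\tfrac{\lambda_3(\eta_p^{(t)})^2}{2}\bigr)$ (using the exact expansion around $\bp^{(t+1)}$, the assumption $\|\bg^{(t+1)}\|_2\leq R_g$, and $\|\bp^{(t+1)}-\bp^{(t)}\|_2\leq \eta_p^{(t)}\|\bg^{(t)}\|_2$). With this reordering your estimates assemble into exactly the claimed inequality.
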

In this case, if we set $\eta^{(t)}_p$ as $t^{-2}$, 
then $\frac{1}{T}\sum_t \|\bx^{(t+1)} - \bx^{(t)}\|^2_2$
would decrease at the rate of $\calO(T^{-1})$, which is the 
same as Thm.~\ref{thm:noncvxproxsmoothgrad}.

\section{Conclusion}
In this paper, we studied the 
compressive sensing with a multiple convex-set domain. 
First we analyzed the impact of prior knowledge 
$\bx\in \bigcup_i \calC_i$ on the minimum number of measurements $M$
to guarantee uniqueness of the solution. 
We gave an illustrative example and 
showed that significant savings in $M$ can be achieved. 
Then we formulated a universal objective
function and develop an algorithm for the signal reconstruction.
We show that in terms of the speed of convergence
to local minimum,
our proposed algorithm based on \emph{multiplicative weight update}
and \emph{proximal gradient descent} can achieve the optimal 
rate  of $\calO(T^{-1/2})$. 
Further, in terms of
$T^{-1}\|\bx^{(t+1)} - \bx^{(t)}\|^2_2$, the optimal speed 
 increases to $\calO(T^{-1})$. 
 Moreover, provided that we have a prior knowledge about $\bp$,
 we show that we can improve 
the optimal recovery performance 
by $\|\cdot\|_2^2$ regularizers, and hence increasing
the above convergence rate from 
$\calO(T^{-1/2})$ to $\calO\bracket{\frac{\log T}{T}}$.

\bibliographystyle{authordate3}
\bibliography{multi_set}

\begin{thebibliography}{}

\bibitem[\protect\citename{Arora {\em et~al.\ }\relax,
  }2012]{arora2012multiplicative}
{\sc Arora, Sanjeev, Hazan, Elad, \& Kale, Satyen}. 2012.
\newblock The Multiplicative Weights Update Method: a Meta-Algorithm and
  Applications.
\newblock {\em Theory of Computing}, {\bf 8}(1), 121--164.

\bibitem[\protect\citename{Baraniuk {\em et~al.\ }\relax,
  }2010]{baraniuk2010model}
{\sc Baraniuk, Richard~G, Cevher, Volkan, Duarte, Marco~F, \& Hegde, Chinmay}.
  2010.
\newblock Model-based compressive sensing.
\newblock {\em IEEE Transactions on Information Theory}, {\bf 56}(4),
  1982--2001.

\bibitem[\protect\citename{Beck, }2017]{beck2017first}
{\sc Beck, A.} 2017.
\newblock {\em First-Order Methods in Optimization}.
\newblock MOS-SIAM Series on Optimization.
\newblock Society for Industrial and Applied Mathematics.

\bibitem[\protect\citename{Beck \& Teboulle, }2009]{beck2009fast}
{\sc Beck, Amir, \& Teboulle, Marc}. 2009.
\newblock A fast iterative shrinkage-thresholding algorithm for linear inverse
  problems.
\newblock {\em SIAM journal on imaging sciences}, {\bf 2}(1), 183--202.

\bibitem[\protect\citename{Boucheron {\em et~al.\ }\relax,
  }2013]{boucheron2013concentration}
{\sc Boucheron, S., Lugosi, G., \& Massart, P.} 2013.
\newblock {\em {Concentration Inequalities: A Nonasymptotic Theory of
  Independence}}.
\newblock Oxford University Press.

\bibitem[\protect\citename{Boyd, }n.d.]{boyd2011alternating}
{\sc Boyd, Stephen}.
\newblock Alternating direction method of multipliers.

\bibitem[\protect\citename{Candes {\em et~al.\ }\relax, }2011]{candes2011phase}
{\sc Candes, Emmanuel~J, Eldar, Yonina~C, Strohmer, Thomas, \& Voroninski,
  Vladislav}. 2011.
\newblock Phase Retrieval via Matrix Completion.

\bibitem[\protect\citename{Chandrasekaran {\em et~al.\ }\relax,
  }2012]{chandrasekaran2012convex}
{\sc Chandrasekaran, Venkat, Recht, Benjamin, Parrilo, Pablo~A, \& Willsky,
  Alan~S}. 2012.
\newblock The convex geometry of linear inverse problems.
\newblock {\em Foundations of Computational mathematics}, {\bf 12}(6),
  805--849.

\bibitem[\protect\citename{Chen \& Candes, }2015]{chen2015solving}
{\sc Chen, Yuxin, \& Candes, Emmanuel}. 2015.
\newblock Solving random quadratic systems of equations is nearly as easy as
  solving linear systems.
\newblock {\em Pages  739--747 of:} {\em Advances in Neural Information
  Processing Systems}.

\bibitem[\protect\citename{Dai {\em et~al.\ }\relax, }2009]{dai2009quantized}
{\sc Dai, Wei, Pham, Hoa~Vinh, \& Milenkovic, Olgica}. 2009.
\newblock Quantized compressive sensing.
\newblock {\em arXiv preprint arXiv:0901.0749}.

\bibitem[\protect\citename{Duarte \& Eldar, }2011]{duarte2011structured}
{\sc Duarte, Marco~F, \& Eldar, Yonina~C}. 2011.
\newblock Structured compressed sensing: From theory to applications.
\newblock {\em IEEE Transactions on Signal Processing}, {\bf 59}(9),
  4053--4085.

\bibitem[\protect\citename{Eldar \& Kutyniok, }2012]{eldar2012compressed}
{\sc Eldar, Y.C., \& Kutyniok, G.} 2012.
\newblock {\em Compressed Sensing: Theory and Applications}.
\newblock Compressed Sensing: Theory and Applications.
\newblock Cambridge University Press.

\bibitem[\protect\citename{Foucart \& Rauhut, }n.d.]{foucart2013mathematical}
{\sc Foucart, Simon, \& Rauhut, Holger}.
\newblock {\em A mathematical introduction to compressive sensing}.
\newblock  Vol. 1.

\bibitem[\protect\citename{Gordon, }1988]{gordon1988milman}
{\sc Gordon, Yehoram}. 1988.
\newblock On Milman's inequality and random subspaces which escape through a
  mesh in $\mathbb{R}^n$.
\newblock {\em Pages  84--106 of:} {\em Geometric Aspects of Functional
  Analysis}.
\newblock Springer.

\bibitem[\protect\citename{Qian {\em et~al.\ }\relax, }2018]{qian2018robust}
{\sc Qian, Qi, Zhu, Shenghuo, Tang, Jiasheng, Jin, Rong, Sun, Baigui, \& Li,
  Hao}. 2018.
\newblock Robust Optimization over Multiple Domains.
\newblock {\em arXiv preprint arXiv:1805.07588}.

\bibitem[\protect\citename{Shalev-Shwartz {\em et~al.\ }\relax,
  }2012]{shalev2012online}
{\sc Shalev-Shwartz, Shai, {\em et~al.\ }\relax}. 2012.
\newblock Online learning and online convex optimization.
\newblock {\em Foundations and Trends{\textregistered} in Machine Learning},
  {\bf 4}(2), 107--194.

\bibitem[\protect\citename{Silva {\em et~al.\ }\relax, }2011]{silva2011blind}
{\sc Silva, Jorge, Chen, Minhua, Eldar, Yonina~C, Sapiro, Guillermo, \& Carin,
  Lawrence}. 2011.
\newblock Blind compressed sensing over a structured union of subspaces.
\newblock {\em arXiv preprint arXiv:1103.2469}.

\bibitem[\protect\citename{Zhang {\em et~al.\ }\relax, }Nov.
  2017.]{hzhangEnergy}
{\sc Zhang, H., Abdi, A., \& Fekri, F.} Nov. 2017..
\newblock Compressive Sensing with Energy Constraint.
\newblock {\em In:} {\em IEEE Information Theory Workshop (ITW'17)}.

\end{thebibliography}

\begin{appendices}

\section{Proof of \autoref{thm:cs_multi_set-uniq}}
\label{thm_proof:cs_multi_set-uniq}

\begin{proof}
Note that for any-vector 
non-zero $\V{h}\in \Null(\bA)\bigcap \calT\bigcap \left(\bigcup_{i,j} \wt{\calC}_{i,j}\right)$, 
we can always rescale to make it unit-norm. 
Hence we can rewrite the event $\calE$ as 
\[
\calE = \
\left\{\Null(\bA)\bigcap \SS^{n-1}_2 \bigcap \
\calT \bigcap \left(\bigcup_{i,j} \wt{\calC}_{i,j}\right) = \emptyset \right\}.  
\]
For the conciseness of notation, we define 
$\wt{\calC}$ to be $\wt{\calC} = \bigcup_{i,j} \wt{\calC}_{i,j}$. 
Then we upper-bound $1- \Prob(\calE)$ as 
\[
\begin{aligned}
& 1 - \Prob(\calE) = \
\Prob\bracket{\Null(\bA)\bigcap \SS^{n-1}_2 \bigcap \
\calT \bigcap \wt{\calC}\neq \emptyset } \\
\stackrel{(i)}{\leq} &
\underbrace{\Prob\bracket{\Null(\bA)\bigcap \SS^{n-1}_2 \bigcap \
\calT  \neq \emptyset }}_{\calP_1}\vcap
\underbrace{\Prob\bracket{\Null(\bA)\bigcap \SS^{n-1}_2 \bigcap \
\wt{\calC} \neq \emptyset }}_{\calP_2}, 
\end{aligned}
\]
where $(i)$ is because 
\[
\begin{aligned}
&\left\{\Null(\bA)\bigcap \SS^{n-1}_2 \bigcap \
\calT \bigcap \wt{\calC}\neq \emptyset \right\} \
\subseteq \left\{\Null(\bA)\bigcap \SS^{n-1}_2 \bigcap \
\calT  \neq \emptyset \right\},\\
&\left\{\Null(\bA)\bigcap \SS^{n-1}_2 \bigcap \
\calT \bigcap \wt{\calC}\neq \emptyset \right\} \
\subseteq \
\left\{\Null(\bA)\bigcap \SS^{n-1}_2 \bigcap \
\wt{\calC} \neq \emptyset \right\}
\end{aligned}.
\]
With \autoref{lemma:uniquesolp1bound} and 
\autoref{lemma:uniquesolp2bound}, we can 
separately bound $\calP_1$ and $\calP_2$ and 
finish the proof.
	
\end{proof}

\begin{lemma}
\label{lemma:uniquesolp1bound}
We have 
$\calP_1  \leq 1 \vcap \exp\left(-\frac{\left(a_m -\omega(\calT)\right)^2}{2}\right)$, if $a_m \geq \omega(\calT)$.
\end{lemma}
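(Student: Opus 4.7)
The plan is to recognize $\calP_1$ as precisely the type of ``escape from the mesh'' probability that Gordon's theorem is designed to control, and then to apply the theorem in essentially one line. The set in question is a cone $\calT$ intersected with the null space of a Gaussian matrix; writing $S \defequal \calT \cap \SS^{N-1}$, the event $\{\Null(\bA) \cap \SS^{N-1} \cap \calT \neq \emptyset\}$ is equivalent to $\{\min_{\bu \in S}\|\bA\bu\|_2 = 0\}$, and its complement is exactly the event whose probability Gordon's theorem lower bounds.

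Concretely, Gordon's escape-through-a-mesh theorem (see \cite{gordon1988milman}) asserts that for $\bA \in \RR^{M \times N}$ with i.i.d.\ $\normdist(0,1)$ entries and any subset $S$ of the unit sphere,
\[
\Prob\left(\min_{\bu \in S}\|\bA\bu\|_2 \geq a_M - \omega(S) - t\right) \geq 1 - e^{-t^2/2}
\]
for every $t \geq 0$. I would apply this with $S = \calT \cap \SS^{N-1}$ and choose $t = a_M - \omega(\calT)$, which is nonnegative under the hypothesis $a_M \geq \omega(\calT)$. With this choice the lower bound on $\min \|\bA\bu\|_2$ degenerates to $0$, and hence
\[
\Prob\left(\min_{\bu \in S}\|\bA\bu\|_2 > 0\right) \geq 1 - \exp\!\left(-\tfrac{(a_M - \omega(\calT))^2}{2}\right).
\]
Passing to complements yields the claimed exponential bound on $\calP_1$; the outer $1 \vcap$ in the statement is merely the trivial reminder that any probability is at most $1$, which matters only in the edge case $a_M = \omega(\calT)$.

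The only subtlety worth flagging, rather than a genuine obstacle, is the convention for $\omega(\calT)$: since $\calT$ is a cone, $\E\sup_{\bu \in \calT}\langle \bg,\bu\rangle$ is infinite without a normalization, so the definition in the excerpt must be read as implicitly restricting to $\calT \cap \SS^{N-1}$ when the argument is a cone, matching \cite{chandrasekaran2012convex}. Once this identification is made the argument is a direct application of Gordon's inequality with no additional estimates needed, so I do not anticipate a hard step.
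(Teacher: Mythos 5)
Your argument is correct and is essentially the paper's own proof: the paper likewise passes to the complement event and invokes Corollary 3.3 of \cite{chandrasekaran2012convex}, which is exactly Gordon's escape-through-a-mesh bound applied to $\calT \cap \SS^{n-1}_2$ with the Gaussian width read on the sphere, as you note. One cosmetic point: taking $t = a_M - \omega(\calT)$ exactly makes the guaranteed event $\{\min_{\bu}\|\bA\bu\|_2 \geq 0\}$ trivial, so to conclude strict positivity you should apply the bound at $t - \delta$ and let $\delta \to 0^{+}$ (or cite the corollary as the paper does); this is a one-line fix, not a gap.
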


\begin{proof}
Note that we have 
\[
\begin{aligned}
&\underbrace{\Prob\bracket{\Null(\bA)\bigcap \SS_2^{n-1}\bigcap \
\calT \neq \emptyset }}_{\calP_1} \
+ \underbrace{\Prob\bracket{\Null(\bA)\bigcap \SS_2^{n-1}\bigcap \
\calT = \emptyset}}_{\calP_1^c}= 1.
\end{aligned}
\]
Then we lower-bound $\calP_1^c$ as 
\[
\begin{aligned}
&\calP_1^c = \Prob\bracket{\min_{\bu \in \SS^{n-1}_2 \bigcap \calT}\|\bA\bu\|_2 > 0} \
\stackrel{(i)}{\geq}  1 - \exp\left(-\frac{\left(a_m -\omega\left(\calT\right)\right)^2}{2}\right), 
\end{aligned}
\]
provided $a_m \geq \omega(\calT)$, 
where $(i)$ is because of Corollary 3.3 in 
\cite{chandrasekaran2012convex}, 
and $\omega(\cdot)$ denotes the Gaussian width.	
\end{proof}

\begin{lemma}
\label{lemma:uniquesolp2bound}
If $ \omega(\wt{\calC}_{ij}) \leq 1 - 2\epsilon a_M$,
we have 
\[
\begin{aligned}
\calP_2 \leq 1 \vcap \frac{3}{2}\exp\left(-\frac{\epsilon ^2a_M^2}{2}\right) \
+ \
\sum_{i \leq j} \exp\bracket{-\dfrac{\bracket{(1-2\epsilon)a_M -\
\omega(\wt{\calC}_{ij})}^2}{2}},
\end{aligned}
\] 
\end{lemma}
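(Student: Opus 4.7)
The plan is to bound $\calP_2$ via a symmetry reduction to pairs $i\leq j$, a Gaussian-concentration step producing the isolated $\frac{3}{2}\exp(-\epsilon^2 a_M^2/2)$ term, and Gordon's escape-through-the-mesh inequality applied separately to each cone $\wt{\calC}_{i,j}$ combined with a union bound. The symmetry $\wt{\calC}_{j,i}=-\wt{\calC}_{i,j}$ is immediate from swapping $\bx_1\in \calC_i$ and $\bx_2\in \calC_j$ in the definition; since $\Null(\bA)$ and $\SS^{N-1}$ are centrally symmetric, the events $\{\Null(\bA)\cap \SS^{N-1}\cap \wt{\calC}_{i,j}\neq \emptyset\}$ and $\{\Null(\bA)\cap \SS^{N-1}\cap \wt{\calC}_{j,i}\neq \emptyset\}$ coincide, so only pairs with $i\leq j$ need appear in the union bound.

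Next I would introduce the ``good event'' $\mathcal{G}=\{\|\bg\|_2\geq (1-\epsilon)a_M\}$ for $\bg\sim\normdist(\bZero,\bI_M)$; Gaussian-Lipschitz concentration (Borell--TIS) together with the standard comparison between $a_M$ and the median of $\|\bg\|_2$ yields $\Prob(\mathcal{G}^c)\leq \tfrac{3}{2}\exp(-\epsilon^2 a_M^2/2)$, which contributes the first term of the claim. On $\mathcal{G}$, writing $\Omega_{i,j}=\wt{\calC}_{i,j}\cap \SS^{N-1}$, I would apply Gordon's inequality (Corollary 3.3 of \cite{chandrasekaran2012convex}) to each $\Omega_{i,j}$ with deviation parameter $t=(1-2\epsilon)a_M-\omega(\wt{\calC}_{i,j})\geq 0$, obtaining
\[
\Prob\!\Big(\min_{\bu\in \Omega_{i,j}}\|\bA\bu\|_2\leq 2\epsilon a_M \,\Big|\, \mathcal{G}\Big)\leq \exp\!\Big(-\tfrac{((1-2\epsilon)a_M-\omega(\wt{\calC}_{i,j}))^2}{2}\Big).
\]
Since $\{\min_{\bu\in \Omega_{i,j}}\|\bA\bu\|_2=0\}$ implies $\min\leq 2\epsilon a_M$, this controls each summand. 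Combining $\calP_2\leq \Prob(\mathcal{G}^c)+\sum_{i\leq j}\Prob(\{\min=0\}\cap \mathcal{G})$ and clipping at $1$ produces the stated inequality.

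The main obstacle will be calibrating the constants so the final expression matches exactly the stated form. The $\frac{3}{2}$ factor is sensitive to the precise Gaussian concentration inequality invoked (Borell--TIS, a direct $\chi^2$-tail bound, or a one-sided refinement), so the split between the good-event threshold $(1-\epsilon)a_M$ and Gordon's deviation $(1-2\epsilon)a_M-\omega(\wt{\calC}_{i,j})$ must be chosen consistently with the version in use; otherwise the factors $\epsilon$ and $2\epsilon$ will not line up cleanly in the exponents. Beyond this bookkeeping, the rest is a routine application of Gordon's theorem together with a union bound over $i\leq j$.
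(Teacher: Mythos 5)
Your proposal is correct in substance but takes a genuinely different route from the paper. The paper does not invoke the packaged escape-through-the-mesh corollary cone by cone; instead it re-derives Gordon's theorem for the whole union $\bigcup_{i\le j}\big(\SS^{n-1}_2\cap\wt{\calC}_{ij}\big)$ from Gordon's comparison lemma: it introduces a distance parameter $d$ and auxiliary quantities $Q_1,Q_2$, pays Gaussian concentration for $\|\bA\|_2$, for one scalar Gaussian, and for $\|\bg\|_2$ (these produce the additive $\tfrac{3}{2}\exp(-\epsilon^2a_M^2/2)$ exactly once), applies the union bound only to the supremum of the linear process $\sum_{j_2}u_{j_2}h_{j_2}$ over each piece, and finally sends $d\to 0$. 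You instead apply Corollary 3.3 of \cite{chandrasekaran2012convex} separately to each $\Omega_{ij}=\SS^{n-1}_2\cap\wt{\calC}_{ij}$ at threshold $2\epsilon a_M$ and union-bound over $i\le j$, justifying the restriction to $i\le j$ by the central symmetry $\wt{\calC}_{ji}=-\wt{\calC}_{ij}$ (which the paper uses silently). Your route is shorter and in fact proves a slightly stronger statement: since $\{\min_{\bu\in\Omega_{ij}}\|\bA\bu\|_2=0\}\subseteq\{\min_{\bu\in\Omega_{ij}}\|\bA\bu\|_2\le 2\epsilon a_M\}$ holds deterministically, the unconditional corollary plus a union bound already gives $\calP_2\le\sum_{i\le j}\exp\big(-\big((1-2\epsilon)a_M-\omega(\wt{\calC}_{ij})\big)^2/2\big)$, so the $\tfrac{3}{2}\exp(-\epsilon^2a_M^2/2)$ term is pure slack (and taking threshold $0$ would even remove the $\epsilon$-loss). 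One blemish to repair: your good event $\mathcal{G}=\{\|\bg\|_2\ge(1-\epsilon)a_M\}$ is superfluous and, as written, not quite coherent, because $\bg$ has no stated relation to $\bA$; if $\mathcal{G}$ were a function of $\bA$ you could not apply Corollary 3.3 conditionally on it, and if it is independent of $\bA$ the conditioning is vacuous. Dropping $\mathcal{G}$ entirely leaves a clean and valid proof of the lemma.
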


\begin{proof}
Note that we have 
\[
\begin{aligned}
&\underbrace{\Prob\bracket{\Null(\bA) \bigcap {S}^{n-1}_2 \bigcap \wt{\calC} \neq  \emptyset }}_{\calP_2}\
 + \
\underbrace{\Prob\bracket{\Null(\bA) \bigcap {S}^{n-1}_2 \bigcap \wt{\calC} = \emptyset }}_{\calP_2^c}\
= 1. 
\end{aligned}
\]
Here we upper-bound $\calP_2$ via lower-bounding $\calP_2^c$. 
First we define $\calP_2^c(d)$ as 
\[
\calP_2^c(d) \defequal \
\Prob\bracket{\min_{\bu\in \bigcup \+S_{i,j},~\bv\in \Null(\bA)}\|
\bu - \bv\|_2 \geq d}. 
\]
Then we have $\calP_2^c = \lim_{d\rightarrow 0} \calP_2^c(d)$.
The following proof trick is 
fundamentally the 
same as that are used in Theorem 4.1 in \cite{gordon1988milman}
but in a clear format by only keeping the necessary parts
for this scenario. 
We only present it for the 
self-containing of this paper and do not 
claim any novelties.  
\par 
We first 
define $\+S_{i,j} = \SS^{n-1}_2 \bigcap \wt{\calC}_{i,j}$
and two quantities 
$Q_1$ and $Q_2$ as 
\[
\begin{aligned}
Q_1 &\defequal \Prob\
\bracket{\min_{\bu\in \bigcup \+S_{i,j}}\|\bA\bu\|_2 \geq d(1+\epsilon_1)\Expc\|\bA\|_2}, \\
Q_2 &\defequal \Prob\
\bracket{\bigcap_{i_1\leq j_1}\bigcap_{\bu\in \+S_{i_1,j_1}}\
\bracket{\left(\sum_{i_2=1}^M g_{i_2}^2\right)^{1/2} + \sum_{j_2} u_{j_2} h_{j_2} \geq \
d(1+\epsilon_1)\Expc\|\bA\|_2 + \epsilon_2 a_M }}, 
\end{aligned}
\]
where $a_M = \Expc \|\bg\|_2,~\bg\in \normdist(\bZero,~\bI_{M\times M})$, 
and $g_j$, $h_i$ are iid standard normal random variables
$\normdist(0, 1)$. The following proof 
is divided into $3$ parts. 
\par 
\textbf{Step I}. We prove that 
$\calP^c_2(d) + e^{-\epsilon_1^2 a_M^2/2} \geq Q_1$, which is 
done by 
\[
\begin{aligned}
Q_1 &= \Prob\bracket{\min_{\bu\in \bigcup \+S_{i,j}}\|\bA\bu\|_2 \geq \
d(1+\epsilon_1)\Expc\|\bA\|_2} \\
=~& \
\Prob\bracket{\min_{\bu\in \bigcup \+S_{i,j},~\bv\in \Null(\bA)}\
\|\bA(\bu-\bv)\|_2 \geq d(1 + \epsilon_1)\Expc \|\bA\|_2}\\
\stackrel{(i)}{\leq}~& \
\Prob\bracket{\min_{\bu\in \bigcup \+S_{i,j},~\bv\in \Null(\bA)}\
\|\bA\|_2 \|\bu - \bv\|_2 \geq d(1 + \epsilon_1)\Expc \|\bA\|_2}\\
\stackrel{(ii)}{\leq} ~& \
\Prob\bracket{\|\bA\|_2 \geq (1+\epsilon_1 )\Expc\|\bA\|_2} \
+ \underbrace{\Prob\bracket{\min_{\bu\in \bigcup\+S_{i,j},~\bv\in \Null(\bA)}~\|\bu - \bv\|_2 \geq d}}_{\leq~\calP_2^c(d)} \\
\stackrel{(iii)}{\leq} ~& \
\exp\left(-\dfrac{\epsilon_1^2(\Expc \|\bA\|_2)^2}{2}\right) + \
\calP_2^c(d) \\
\stackrel{(iv)}{\leq}& \exp\left(-\dfrac{\epsilon_1^2 a_M^2}{2}\right) + \calP_2^c(d),
\end{aligned}
\]
where in $(i)$ we use 
$\|\bA\|_2 \|\bu - \bv\|_2 \geq \|\bA(\bu-\bv)\|_2$, 
in $(ii)$ we use the union bound for 
\[
\begin{aligned}
&\left\{\min_{\bu\in \bigcup \+S_{i,j},~\bv\in \Null(\bA)}\
\|\bA\|_2 \|\bu - \bv\|_2 \geq d(1 + \epsilon_1)\Expc \|\bA\|_2\right\} \\
\subseteq ~& \
\left\{\|\bA\|_2 \geq (1+\epsilon_1 )\Expc\|\bA\|_2\right\} \bigcup \
\left\{\min_{\bu\in \bigcup\+S_{i,j},~\bv\in \Null(\bA)}~\|\bu - \bv\|_2 \geq d \right\},
\end{aligned}
\]
in $(iii)$ we use the Gaussian 
concentration inequality Lipschitz functions
(Theorem 5.6 in \cite{boucheron2013concentration}) for $\|\bA\|_2$, 
and in $(v)$ we use 
$\|\bA\|_2 \geq \|\bA\be_1\|_2 = \|\sum_{i=1}^M A_{i, 1}\|_2$, 
where $\be_1$ denotes the canonical basis.
\par 
\textbf{Step II}. We prove that 
$Q_1 + \frac{1}{2}e^{-\epsilon_2^2a_M^2/2} \geq Q_2$, 
which is done by 
\[
\begin{aligned}
& Q_1 + \frac{1}{2}e^{-\epsilon^2a_M^2/2} \stackrel{(i)}{\geq} \
Q_1 + \Prob\{g \geq \epsilon_2 a_M \} \\
\stackrel{(ii)}{\geq} ~& \Prob\bracket{\
\min_{\bu\in \bigcup \+S_{i_1,j_1}}\|\bA\bu\|_2 + g\|\bu\|_2 \geq \
d(1 + \epsilon)\Expc\|\bA\|_2 + \
\epsilon_2 a_M \|\bu\|_2 } \\
=~&\Prob\bracket{\bigcap_{i_1\leq j_1}\bigcap_{\bu\in \+S_{i_1,j_1}}\
\|\bA\bu\|_2 + g\|\bu\|_2 \geq \
d(1 + \epsilon)\Expc\|\bA\|_2 + \
\epsilon_2 a_M \|\bu\|_2} \\
\stackrel{(iii)}{\geq}~&\
\underbrace{\Prob\bracket{\
\bigcap_{i_1\leq j_1}\bigcap_{u\in \+S_{i_1,j_1}} \
\left(\sum_{i_2 = 1}^M g_{i_2}^2\right)^{\frac{1}{2}} + \
\sum_{j_2 = 1}^N u_{j_2}h_{j_2}\geq \
d(1 + \epsilon_1)\Expc\|\bA\|_2 + \
\epsilon_2 a_M }}_{Q_2}, 
\end{aligned}
\]
where in 
$(i)$ $\bg$ is a RV satisfying standard normal distribution, 
in  $(ii)$ we use the union bound, 
and $(iii)$ comes from Lemma 3.1 in \cite{gordon1988milman}
and $\|\bu\|_2 = 1$. 
\par 
\textbf{Step III}. We lower bound $Q_2$ as 
\[
\begin{aligned}
& 1 - Q_2 =  \Prob\bracket{\bigcup_{i_1 \leq j_1} \bigcup_{u\in \+S_{i_1, j_1}} \
~\left[\left(\sum_{i_2=1}^M g_{i_2}^2\right)^{\frac{1}{2}} + \
\sum_{j_2 = 1}^N u_{j_2}h_{j_2} \leq \
d(1+\epsilon_1)\Expc\|\bA\|_2 + \epsilon_2 a_M  \right]} \\
\leq ~&\Prob\bracket{\left(\sum_{i_2=1}^M g_{i_2}^2\right)^{\frac{1}{2}} \leq (1-\epsilon_2)a_M } + \
\Prob\bracket{\bigcup_{i_1 \leq j_1} \bigcup_{u\in \+S_{i_1, j_1}} \sum_{j_2 = 1}^N u_{j_2}h_{j_2}\leq \
d(1+\epsilon_1) \Expc \|\bA\|_2 - (1-2\epsilon_2)a_M } \\
\leq~& 
\Prob\bracket{\left(\sum_{i_2=1}^M g_{i_2}^2\right)^{\frac{1}{2}} -\
a_M \leq -\epsilon_2 a_M } + \
\Prob\bracket{\bigcup_{i_1 \leq j_1} \bigcup_{u\in \+S_{i_1, j_1}} \sum_{j_2 = 1}^N u_{j_2}h_{j_2}\leq \
d(1+\epsilon_1) \Expc \|\bA\|_2 - (1-2\epsilon_2)a_M } \\
\stackrel{(i)}{\leq} ~&  \exp\left(-\dfrac{\epsilon_2^2 a_M^2}{2}\right) + 
\Prob\bracket{\bigcup_{i_1 \leq j_1} \bigcup_{u\in \+S_{i_1, j_1}} \sum_{j_2 = 1}^N u_{j_2}h_{j_2}\leq \
d(1+\epsilon_1) \Expc \|\bA\|_2 - (1-2\epsilon_2)a_M } \\
\stackrel{(ii)}{\leq} ~&\exp\left(-\dfrac{\epsilon_2^2 a_M^2}{2}\right) + 
\sum_{i_1 \leq j_1} \Prob\bracket{\bigcup_{u\in \+S_{i_1, j_1}} \
\sum_{j_2 = 1}^N u_{j_2} h_{j_2} \leq \
d(1+\epsilon_1)\Expc \|\bA\|_2 - (1-2\epsilon_2)a_M } \\
\stackrel{(iii)}{\leq}~& \exp\left(-\dfrac{\epsilon_2^2 a_M^2}{2}\right) + \
\sum_{i_1 \leq j_1} \Prob\bracket{\max_{u \in \+S_{i_1, j_1}} \sum_{j_2 = 1}^N u_{j_2} h^{'}_{j_2} \geq \
(1-2\epsilon_2)a_M  - d(1+\epsilon_1) \Expc \|\bA\|_2} \\ 
\stackrel{(iv)}{\leq}~& \exp\left(-\dfrac{\epsilon_2^2 a_M^2}{2}\right) + \
\sum_{i_1 \leq j_1}  \exp\bracket{-\dfrac{\left((1-2\epsilon_2)a_M - d(1+\epsilon_1)\Expc\|\bA\|_2 - \omega(\wt{\calC}_{ij}) \right)^2}{2}}, 
\end{aligned} 
\]
where in $(i)$ we use 
$\Expc \sqrt{\sum_{i_2=1}^M g_{i_2}^2} = a_M$ and 
Gaussian concentration inequality in \cite{boucheron2013concentration}, 
in $(ii)$ we use union-bound, 
in $(iii)$ we define
$h^{'} = - h$ and flip the sign 
by the symmetry of Gaussian variables, 
and in $(iv)$ we 
use the definition of $\omega(\wt{\calC}_{ij})$.
Assuming 
$(1-2\epsilon_2)a_M \geq d(1+\epsilon_1)\Expc\|\bA\|_2 + \omega(\wt{\calC}_{ij})$, 
we finish the proof via 
the Gaussian concentration inequality 
\cite{boucheron2013concentration}. 
\par 
Combining the above together and set $d(1+\epsilon_1)\rightarrow 0$ while
$\epsilon_1 \rightarrow \infty$,  
we conclude that 
\[
\calP^c_2 \geq 1 - \frac{3}{2}\exp\left(-\frac{\epsilon ^2a_M^2}{2}\right) - \
\sum_{i \leq j} \exp\left(-\frac{\bracket{(1-2\epsilon)a_M -\
\omega(\wt{\calC}_{ij})}^2}{2}\right),
\]
provided $(1-2\epsilon)a_M \geq \omega(\wt{\calC}_{ij})$,
and finish the proof. 
	
\end{proof}

\section{Proof of \autoref{thm:noncvxproxlocalconverge}}
\label{thm_proof:noncvxproxlocalconverge}

\begin{proof}
Define $\bp^{*}$ and $\bx^{*}$ as 
\begin{equation}
\label{eqn:p_x_star}
\bp^{*} = \argmin_{\bp}\sum_t \calL\left(\bp, \bx^{(t)}\right),~~~
\bx^{*} = \argmin_{\bx}\sum_t \calL\left(\bp^{(t)}, \bx\right).
\end{equation}
respectively
First we define $\calT^t_1$ and $\calT^t_2$ as
\[
\begin{aligned}
\calT^t_1 &= \calL(\bp^{(t)}, \bx^{(t)}) -  \calL(\bp^{*}, \bx^{(t)}); \\
\calT^t_2 &= \calL(\bp^{(t)}, \bx^{(t)}) - \calL(\bp^{(t)}, \bx^{*}),	
\end{aligned}
\]
respectively. Then our goal becomes 
bounding $|\sum_t \calT^t_1| + |\sum_t \calT^t_2|$.
With \autoref{lemma:noncvx_prox_T1bound} and 
\autoref{lemma:noncvx_prox_T2bound}, we have finished
the proof. 
\end{proof}

\begin{lemma}
\label{lemma:noncvx_prox_T1bound}
Define 
$\calT^t_1 = \calL(\bp^{(t)}, \bx^{(t)}) -  \calL(\bp^{*}, \bx^{(t)})$, 
where  $\bp^{*}$ is defined in Eq.~(\ref{eqn:p_x_star}), 
then we have 
\[
0 < \sum_t  \calT_1^t \leq R_f \sqrt{T\log L}, 
\]
when $\eta^{(t)}_p = R_f\sqrt{2\log L/T}$. 
\end{lemma}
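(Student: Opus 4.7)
The plan is to recognize this lemma as a regret guarantee for the Hedge/exponentiated–weights algorithm, since the update rule
$p_i^{(t+1)}\propto p_i^{(t)}e^{-\eta_p^{(t)} f_i(\bx^{(t)})}$
together with $\calL(\bp,\bx)=\sum_i p_i f_i(\bx)$ yields
$\calT_1^t=\langle \bp^{(t)}-\bp^{*},\,\bff(\bx^{(t)})\rangle$,
where $\bff(\bx)=(f_1(\bx),\ldots,f_L(\bx))^{\rmt}$. Because $\bp\mapsto \sum_t\calL(\bp,\bx^{(t)})$ is linear on the simplex, its minimizer $\bp^{*}$ may be taken to be a vertex $\be_{i^{*}}$ with $i^{*}=\argmin_i\sum_t f_i(\bx^{(t)})$, which will be convenient for the lower-bound step. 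The non-negativity $\sum_t\calT_1^t\ge 0$ is then immediate from the optimality of $\bp^{*}$.

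For the upper bound I would carry out the standard potential-function argument. Define the unnormalized weights $w_i^{(t)}=\exp\bigl(-\eta_p\sum_{s<t} f_i(\bx^{(s)})\bigr)$ and the partition function $W^{(t)}=\sum_i w_i^{(t)}$, so that $p_i^{(t)}=w_i^{(t)}/W^{(t)}$. The key inequality to establish is
\[
\frac{W^{(t+1)}}{W^{(t)}}=\sum_i p_i^{(t)} e^{-\eta_p f_i(\bx^{(t)})}\le \exp\Bigl(-\eta_p\langle \bp^{(t)},\bff(\bx^{(t)})\rangle+\tfrac{\eta_p^2 R_f^2}{2}\Bigr),
\]
which follows from Hoeffding's lemma applied to the probability measure $\bp^{(t)}$ on the bounded random variable $f_i(\bx^{(t)})\in[-R_f,R_f]$. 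Taking logs and telescoping over $t=1,\ldots,T$ gives
\[
\log W^{(T+1)}-\log L \le -\eta_p\sum_t\langle \bp^{(t)},\bff(\bx^{(t)})\rangle+\tfrac{T\eta_p^2 R_f^2}{2}.
\]
Next I would lower-bound the left-hand side by keeping only the term corresponding to $i^{*}$, namely $\log W^{(T+1)}\ge -\eta_p\sum_t f_{i^{*}}(\bx^{(t)})=-\eta_p\sum_t\langle \bp^{*},\bff(\bx^{(t)})\rangle$. Rearranging yields the standard regret bound
\[
\sum_t \calT_1^t \le \frac{\log L}{\eta_p}+\frac{T\eta_p R_f^2}{2},
\]
and balancing the two terms by choosing $\eta_p=R_f^{-1}\sqrt{2\log L/T}$ produces a bound of order $R_f\sqrt{T\log L}$, matching the claim up to an absolute constant.

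The main obstacle is purely technical: producing the Hoeffding-type bound on $\Expc_{\bp^{(t)}} e^{-\eta_p f_i}$ with the correct constant so that the two terms in the regret bound balance to the stated $R_f\sqrt{T\log L}$ rather than, say, $R_f\sqrt{2T\log L}$. Depending on whether one applies Hoeffding's lemma to $f_i\in[-R_f,R_f]$ or uses the elementary estimate $e^{-\eta x}\le 1-\eta x+\eta^2 x^2/2$ (valid when $|\eta x|\le 1$, which must be verified from the chosen step size), the variance-type constant shifts between $1/2$ and $1/8$, and one has to track these carefully to match the lemma exactly. Everything else (the telescoping, the use of $\log W^{(1)}=\log L$ from uniform initialization, and the vertex minimizer observation) is routine.
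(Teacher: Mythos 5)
Your proposal is correct and follows essentially the same route as the paper: the standard multiplicative-weights potential argument with $\Psi_t=\sum_i w_i^{(t)}$, the per-round bound $\Psi_{t+1}/\Psi_t\le\exp\bigl(-\eta_p\langle\bp^{(t)},\bff(\bx^{(t)})\rangle+\eta_p^2R_f^2/2\bigr)$ (the paper gets it from $e^{-x}\le 1-x+x^2/2$ and $1+x\le e^x$ rather than Hoeffding's lemma, but the constant is identical), the lower bound via the vertex $i^{*}$, and telescoping to the regret bound $\log L/\eta_p+T\eta_p R_f^2/2$. The constant-tracking concern you raise is real but is present in the paper's own proof as well (its stated step size and final constant do not quite match the computation), so it is not a gap specific to your argument.
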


\begin{proof}
Based on the definition of $\bp^{*}$, 
we note that $\sum_t \calT_1^t$ is non-negative and prove 
the lower-bound. 
Then we prove its upper-bound.
\par 
Since the function is linear, 
optimal $\bp^{*}$ must be at the edge of $\Delta_L$ and
we denote the non-zero entry as $i^{*}$. 
Hence, we could study it via the 
\emph{multiplicative weight algorithm} analysis \cite{arora2012multiplicative}.
First we rewrite the update equation (\ref{eqn:non_cvx_prox_p_update}).
Define
$\bw^{(0)} = \V{1} \in \RR^{L}$ and update
$\bw^{(t+1)}$ as 
\[
{w}^{(t + 1)}_i = w^{(t)}_i \exp\left(-\eta_p^{(t)}f_i(\bx^{(t)})\right),\
p^{(t+1)}_i = \dfrac{{w}^{(t + 1)}_i}{\sum_i {w}^{(t + 1)}_i}.
\]
where $(\cdot)_i$ denotes the $i$th element, and 
$\bp^{(t+1)}$ can be regarded as the normalized version of
$\bw^{(t+1)}$.
\par 
First we define $\Psi_t$ as 
\[
\Psi_t = \sum_{i=1}^L w^{(t)}_i. 
\]
Then we have $\Psi_0 = L$ while 
$\Psi_T \geq w^{(T)}_{i^{*}} =\
\exp\left(-\sum_{t=1}^T \eta^{(t)}_p f_{i^{*}}(\bx^{(t)})\right) = \
\exp\left( -\eta_p \sum_{t=1}^T f_{i^{*}}(\bx^{(t)})\right)$, where 
$\eta_p^{(t)} = \eta_p = \sqrt{2\log L/T}$. 
\par 
Then we study the
division $\Psi_{t+1}/\Psi_t$ as 
\[
\begin{aligned}
& \Psi_{t+1} = \sum_{i}w^{(t+1)}_i = \
\sum_i w^t_i \exp\left(-\eta_p \V{f}_i(\bx^{(t)})\right) \\
\stackrel{(i)}{\leq} ~& \
\sum_i w^t_i \left(1 - \eta_p \V{f}_i(\bx^{(t)}) + \frac{\eta_p^2 f^2_i(\bx^{(t)})}{2} \right) \\
= ~&\Psi_t\left(1 - \eta_p \la \bp^{(t)}, \V{f}(\bx^{(t)}) \ra + \frac{\eta_p^2 R_f^2}{2} \right) \\
\stackrel{(ii)}{\leq} ~& \
\Psi_t \exp\left(-\eta_p \la \bp^{(t)}, \V{f}(\bx^{(t)}) \ra  + \
\frac{\eta_p^2 R_f^2}{2}\right)
\end{aligned}
\]
where in $(i)$ we use $e^{-x}\leq 1 + x + x^2/2$ for $x\geq 0$, 
and in $(ii)$ we use $e^x \geq 1 + x$ for all $x \in \RR$.
Using the above relation iteratively, we conclude that 
\[
\dfrac{\Psi_T}{\Psi_0} \leq \
\exp\left(-\eta_p\sum_t \la \bp^{(t)}, \V{f}(\bx^{(t)})\ra + \frac{\eta_p^2 T R_f^2}{2} \right), 
\]
which gives us 
\[
\log \Psi_T \leq \log L - \eta_p \sum_t \la \bp^{(t)}, \V{f}(\bx^{(t)}) \ra + \
\frac{T\eta_p^2 R_f^2}{2}. 
\]
With relation $\log \Psi_T \geq \log w_{i^{*}}^{(T)}$, we obtain 
\[
\eta_p \sum_t \left(\la \bp^{(t)}, \V{f}(\bx^{(t)})\ra - \la \be_{i^{*}}, \V{f}(\bx^{(t)})\ra \right) 
=\eta_p \sum_t \left(\la \bp^{(t)}, \V{f}(\bx^{(t)})\ra - \la \bp^{*}, \V{f}(\bx^{(t)})\ra \right)
\leq \log L + \frac{T\eta_p^2 R_f^2}{2}, 
\]
where $\be_{i^{*}}$ denotes the canonical basis, namely, 
has $1$ in its $i^{*}$th entry and all others to be zero.

\end{proof}

\begin{lemma}
\label{lemma:noncvx_prox_T2bound}
Define 
$\calT^t_2 = \calL(\bp^{(t)}, \bx^{(t)}) - \calL(\bp^{(t)}, \bx^{*})$, 
where $\bx^{*}$ is defined in Eq.~(\ref{eqn:p_x_star}), and 
set $\eta_x^{(t)} = \eta_x \leq L_h^{-1}$, then we
have
\[ 
0 \leq \sum_t \calT^{t}_{2} \leq \
\dfrac{1}{2\eta_x}\|\bx^{(0)} - \bx^{*}\|^2_2.
\]
\end{lemma}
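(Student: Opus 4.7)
The plan is to recognize the $\bx$-update as a proximal gradient step on the convex composite objective $F_t(\bx) \defequal \calL(\bp^{(t)}, \bx) = \|\bx\|_1 + h_t(\bx)$, where the smooth part $h_t(\bx) = \sum_i p_i^{(t)} h_i(\bx) + \frac{\lambda_1}{2}\|\by - \bA\bx\|_2^2 + \frac{\lambda_2}{2}\|\bx\|_2^2$ is convex (each $h_i$ is a convex penalty and the data and energy terms are convex quadratics) and, by the preceding lemma, $L_h$-strongly smooth uniformly in $t$. Consequently $\sum_t \calT_2^t$ is exactly the cumulative suboptimality of proximal gradient descent run on a sequence of convex composite objectives whose nonsmooth part $\|\bx\|_1$ is fixed and whose smooth part varies with $t$ but shares a common smoothness constant, measured against the fixed comparator $\bx^*$.

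The lower bound $\sum_t \calT_2^t \geq 0$ is immediate from the definition $\bx^* = \argmin_\bx \sum_t \calL(\bp^{(t)}, \bx)$, which forces $\sum_t F_t(\bx^{(t)}) \geq \sum_t F_t(\bx^*)$.

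For the upper bound I would invoke the standard proximal-gradient progress inequality (Beck \cite{beck2017first}): for any convex composite $g+h$ with $h$ being $L_h$-smooth, step size $\eta_x \leq L_h^{-1}$, and iterate $\bx^+ = \prox_{\eta_x g}(\bx - \eta_x \nabla h(\bx))$, one has
\[
(g+h)(\bx^+) - (g+h)(\bu) \leq \frac{1}{2\eta_x}\bigl(\|\bx - \bu\|_2^2 - \|\bx^+ - \bu\|_2^2\bigr)\quad \forall \bu.
\]
Applied at every iteration with $g = \|\cdot\|_1$, $h = h_t$, and $\bu = \bx^*$, this yields $F_t(\bx^{(t+1)}) - F_t(\bx^*) \leq \frac{1}{2\eta_x}(\|\bx^{(t)} - \bx^*\|_2^2 - \|\bx^{(t+1)} - \bx^*\|_2^2)$. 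Summing telescopes the right-hand side to $\frac{1}{2\eta_x}(\|\bx^{(0)} - \bx^*\|_2^2 - \|\bx^{(T)} - \bx^*\|_2^2) \leq \frac{1}{2\eta_x}\|\bx^{(0)} - \bx^*\|_2^2$. Crucially, because the nonsmooth part $\|\bx\|_1$ is time-invariant and only the gradient $\nabla h_t(\bx^{(t)})$—which the algorithm in fact computes—carries the $t$-dependence, no additional cross terms from the time-variation of $F_t$ appear.

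The main piece of bookkeeping I expect to be delicate is a one-step index shift: the descent inequality naturally produces $F_t(\bx^{(t+1)}) - F_t(\bx^*)$ on its left-hand side, whereas $\calT_2^t$ is defined via $\bx^{(t)}$. I would address this by adopting the natural convention that iteration $t$ produces $\bx^{(t)}$ from $\bx^{(t-1)}$ using weights $\bp^{(t-1)}$ (or, equivalently, reindexing the statement), so that summing the descent inequality over the $T$ algorithmic iterations returns $\sum_t [F_t(\bx^{(t)}) - F_t(\bx^*)] \leq \frac{1}{2\eta_x}\|\bx^{(0)} - \bx^*\|_2^2$. Apart from this reindexing and verifying that $h_t$ remains $L_h$-smooth with the same constant for every $t$ (which is guaranteed by the simplex constraint on $\bp^{(t)}$), the proof is a one-line telescoping argument.
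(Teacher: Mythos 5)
Your proposal is correct and follows essentially the same route as the paper: both bound each iteration's suboptimality against the fixed comparator $\bx^{*}$ by $\frac{1}{2\eta_x}\bigl(\|\bx^{(t)}-\bx^{*}\|_2^2-\|\bx^{(t+1)}-\bx^{*}\|_2^2\bigr)$ under $\eta_x\leq L_h^{-1}$ and then telescope, the only difference being that you invoke Beck's fundamental prox-grad progress inequality as a black box while the paper re-derives it via the gradient mapping $H(\bx^{(t)})\in\nabla h(\bx^{(t)})+\partial\|\bx^{(t+1)}\|_1$. The one-step index shift you flag is real, and the paper silently carries the same shift (its chain actually bounds $\calL(\bp^{(t)},\bx^{(t+1)})-\calL(\bp^{(t)},\bx^{*})$), so your reindexing remark is if anything more careful than the original argument.
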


\begin{proof}
From the definition of $\bx^{*}$, we can prove the 
non-negativeness of $\sum_t \calT^t_2$. 
Here we focus on upper-bounding $\sum_t \calT^t_2$ by 
separately analyzing each term $\calT^t_2$. For the conciseness
of notation, we drop the time index $t$. 
Define $h(\bx)$ as
\[
h(\bx) = \sum_i p_i h_i(\bx) + \
\frac{\lambda_1\|\by - \V{Ax}\|^2_2}{2} + \
\frac{\lambda_2 \|\bx\|^2_2}{2}.
\]
First, we rewrite the update equation as 
\[
\bx^{(t+1)} = \bx^{(t)} - \underbrace{\bracket{\bx^{(t)} - \prox_{\eta_x \|\cdot\|_1}\
\left[\bx^{(t)} - \eta_{w} \nabla h(\bx^{(t)})\right]}}_{\eta_x H(\bx^{(t)})},
\]
which means that 
\[
H(\bx^{(t)}) = \dfrac{\bx^{(t)} - \prox_{\eta_x \|\cdot\|_1}\
\bracket{\bx^{(t)} - \eta_{w} \nabla h(\bx^{(t)})}}{\eta_x}, 
\]
where $\prox_{\eta_x \|\cdot\|_1}(\bx)$ is defined as
\cite{beck2017first}
\[
\prox_{\eta_x \|\cdot\|_1}(\bx) = \argmin_{\bz}~\
\eta_x \|\bz\|_1 + \dfrac{1}{2}\|\bx - \bz\|^2_2.
\]
Here we need 
one important property of 
$H(\bx^{(t)})$, that is widely in the 
analysis of proximal gradient descent (direct results of 
Theorem 6.39 in \cite{beck2017first}) 
and states  
\[
H(\bx^{(t)}) \in \nabla h(\bx^{(t)}) + \partial \|\bx^{(t+1)}\|_1.
\]
Here we consider the update 
relation $f(\bx^{(t+1)}) - f(z)$ as 
\[
\begin{aligned}
& \calT^{t+1}_2 = \sum_{i} {p}^{(t)}_i \left[f_i(\bx^{(t+1)}) - f_i(\bx^{*}) \right] = \
\|\bx^{(t+1)}\|_1 + h(\bx^{(t+1)}) - \|\bx^{*}\|_1 - h(\bx^{*}) \\
\stackrel{(i)}{\leq} ~& \la \partial \|\bx^{(t+1)}\|_1,~\bx^{(t+1)} - \bx^{*} \ra + \
h(\bx^{(t)}) + \la \nabla h(\bx^{(t)}),~\bx^{(t+1)} - \bx^{(t)} \ra + \dfrac{L_h}{2}\|\bx^{(t+1)} - \bx^{(t)}\|^2_2 - h(\bx^{*})\\
\stackrel{(ii)}{\leq} ~&  \la \partial  \|\bx^{(t+1)}\|_1,~\bx^{(t+1)} - \bx^{*} \ra + \
\la \nabla  h(\bx^{(t)}),~\bx^{(t)} - \bx^{*} + \bx^{(t+1)} - \bx^{(t)} \ra + \dfrac{L_h}{2}\|\bx^{(t+1)} - \bx^{(t)}\|^2_2 \\
\stackrel{(iii)}{=}~& \
\la \partial  \|\bx^{(t+1)}\|_1 + \nabla  h(\bx^{(t)}),\
 \bx^{(t+1)} - \bx^{*} \ra + \dfrac{L_h}{2}\|\bx^{(t+1)} - \bx^{(t)}\|^2_2 \\
\stackrel{(iv)}{\leq} ~& \la H(\bx^{(t)}), ~\bx^{(t+1)} - \bx^{*}\ra +  \dfrac{L_h}{2}\|\bx^{(t+1)} - \bx^{(t)}\|^2_2 \\
\stackrel{(v)}{=}~& \frac{1}{\eta_x}\
\la \bx^{(t)} - \bx^{(t+1)}, \bx^{(t+1)} - \bx^{*}\ra + \dfrac{L_h}{2}\|\bx^{(t+1)} - \bx^{(t)}\|^2_2 \\
\stackrel{(vi)}{\leq}~&\frac{1}{\eta_x}\
\la \bx^{(t)} - \bx^{(t+1)}, \bx^{(t+1)} - \bx^{*} \ra + \dfrac{1}{2\eta_x} \|\bx^{(t+1)} - \bx^{(t)}\|^2_2\\
=~& \frac{1}{\eta_x}\
\la \bx^{(t)} - \bx^{(t+1)}, \bx^{(t+1)} - \bx^{*} \ra + \dfrac{1}{2\eta_x} \|\bx^{(t+1)} - \bx^{*}\|^2_2\ 
+ \dfrac{1}{2\eta_x} \|\bx^{(t)} - \bx^{*}\|^2_2 + \
\dfrac{1}{\eta_x} \la \bx^{(t+1)} - \bx^{*},~\bx^{*} - \bx^{(t)}\ra  \\
=~& \dfrac{1}{2\eta_x}\|\bx^{(t)} - \bx^{*}\|^2_2 + \dfrac{1}{2\eta_x} \|\bx^{(t+1)} - \bx^{*}\|^2_2 + \
\frac{1}{\eta_x}\la \bx^{(t+1)} - \bx^{*},~\bx^{*} - \bx^{(t)} + \bx^{(t)} - \bx^{(t+1)} \ra \\
=~&  \dfrac{1}{2\eta_x}\|\bx^{(t)} - \bx^{*}\|^2_2 - \
\dfrac{1}{2\eta_x}\|\bx^{(t+1)} - \bx^{*}\|^2_2,
\end{aligned}
\]
where in $(i)$ we use 
$\|\bx^{*}\|_1 \geq \|\bx^{(t+1)}\|_1 + \la \partial \|\bx^{(t+1)}\|_1, \bx^{*} - \bx^{(t+1)}\ra$ based on the definition of
sub-gradients, and 
$h(\bx^{t+1}) \leq h(\bx^{(t)}) + \
\la \nabla h(\bx^{(t)}),~ \bx^{(t+1)} \ra + L_h\|\bx^{(t+1)} - \bx^{(t)}\|^2_2/2 $ from the $L_h$ smoothness of $h(\cdot)$, 
in $(ii)$ we 
use $h(\bx^{*}) \geq h(\bx^{(t)}) + \la \nabla h(\bx^{(t)}),\bx^{*} -  \bx^{(t)} \ra$ since $h(\cdot)$ is convex, 
in $(iii)$ we use $H(\bx^{(t)}) \in \nabla h(\bx^{(t)}) + \partial \|\bx^{(t+1)}\|_1$, and 
in $(iv)$ we use $\bx^{(t+1)} = \bx^{(t)} - \eta_x H(\bx^{(t)})$, 
and in $(vi)$ we use $\eta_x \leq L^{-1}$.
\par 
Hence, we finishes the proof by
\[
\begin{aligned}
&\sum_t \calT^t_2 \leq \
\sum_t\left[\dfrac{1}{2\eta_x}\|\bx^{(t-1)} - \bx^{*}\|^2_2 - \
\dfrac{1}{2\eta_x}\|\bx^{(t)} - \bx^{*}\|^2_2\right]\\
=~& \dfrac{1}{2\eta_x}\|\bx^{(0)} - \bx^{*}\|^2_2 - \
\dfrac{1}{2\eta_x}\|\bx^{(T)} - \bx^{*}\|^2_2 \leq \
\dfrac{1}{2\eta_x}\|\bx^{(0)} - \bx^{*}\|^2_2 \stackrel{(i)}{\leq} \
\dfrac{4R^2}{2\eta_x}, 
\end{aligned}
\]
where in $(i)$ we use $\|\bx^{(0)} - \bx^{*}\|_2 \leq \
\|\bx^{*}\|_2 + \|\bx^{(0)}\| \leq 2R$.
\end{proof}

\section{Proof of \autoref{thm:noncvxproxsmoothgrad}}
\label{thm_proof:noncvxproxsmoothgrad}

\begin{proof}
First we define 
$h(\bx) = \sum_i p_i h_i(\bx) + \
{\lambda_1\|\by - \V{Ax}\|^2_2}/{2} + \
{\lambda_2 \|\bx\|^2_2}/{2}$.
Then we consider the term 
$\calL(\bp^{(t)}, \bx^{(t+1)}) - \calL(\bp^{(t)}, \bx^{(t)})$ and have 
\[
\begin{aligned}
& \calL(\bp^{(t)}, \bx^{(t+1)}) - \calL(\bp^{(t)}, \bx^{(t)}) = \
\|\bx^{(t+1)}\|_1 - \|\bx^{(t)}\|_1 + \sum_i p^{(t)}_i \
\left(h_i(\bx^{(t+1)}) - h_i(\bx^{(t)})\right) \\
\stackrel{(i)}{\leq}~&\
\la \partial \|\bx^{(t+1)}\|_1, \bx^{(t+1)} - \bx^{(t)}\ra + \
\sum_{i} p^{(t)}_i \left[\la\nabla h_i(\bx^{(t)}),~\bx^{(t+1)} - \bx^{(t)} \ra + \
\dfrac{L_h}{2}\|\bx^{(t+1)} - \bx^{(t)}\|^2_2 \right] \\
\stackrel{(ii)}{=}~&\
\la \underbrace{\partial \|\bx^{(t+1)}\|_1 + \sum_i p^{(t)}_i\nabla h_i(\bx^{(t)})}_{H(\bx^{(t)})},~\
\bx^{(t+1)} - \bx^{(t)}\ra + \
\dfrac{L_h}{2}\|\bx^{(t+1)} - \bx^{(t)}\|^2_2 \\
\stackrel{(iii)}{=}~&\
\la \frac{\bx^{(t)} - \bx^{(t+1)}}{\eta^{(t)}_w},~\bx^{(t+1)} - \bx^{(t)} \ra + \
\dfrac{L_h}{2}\|\bx^{(t+1)} - \bx^{(t)}\|^2_2 \\
=~& \dfrac{1}{2}\left(L_h - \frac{2}{\eta^{(t)}_w}\right)\|\bx^{(t+1)} - \bx^{(t)}\|^2_2 \\
\stackrel{(iv)}{\leq}~& -\dfrac{L_h}{2}\|\bx^{(t+1)} - \bx^{(t)}\|^2_2,
\end{aligned}
\]
where in $(i)$ we have 
$\|\bx^{(t)}\|_1 \geq \|\bx^{(t+1)}\|_1 + \la \partial \|\bx^{(t+1)}\|_1,~\bx^{(t)} -\bx^{(t+1)}\ra$ from the definition 
of sub-gradient \cite{beck2017first}, 
and $h_i(\bx^{(t+1)}) \leq h_i(\bx^{(t)}) + \la \nabla h_i(\bx^{(t)}), \bx^{(t+1)} - \bx^{(t)}\ra + \
\frac{L_h}{2}\|\bx^{(t+1)} - \bx^{(t)}\|^2_2$, 
in $(ii)$ we use the property 
$\partial \|\bx^{(t+1)}\|_1 + \sum_i p_i^{(t)} \nabla h_i(\bx^{(t)}) \in H(\bx^{(t)})$, 
in $(iii)$ we use 
$\bx^{(t+1)} = \bx^{(t)} - \eta^{(t)}_w H(\bx^{(t)})$, and 
in $(iv)$ we use $\eta^{(t)}_w \leq L^{-1}$.
\par 
Adopting similar tricks as \cite{qian2018robust}, 
we could upper-bound $\|\bx^{(t+1)} - \bx^{(t)}\|^2_2$ as 
\[
\begin{aligned}
&\|\bx^{(t+1)} - \bx^{(t)}\|^2_2 \leq \dfrac{2}{L_h}\
\left[\calL(\bp^{(t)}, \bx^{(t)}) - \calL(\bp^{(t)}, \bx^{(t+1)})\right] \\
=~& \
\dfrac{2}{L_h}\left[\underbrace{\calL(\bp^{(t)}, \bx^{(t)}) - \calL(\bp^{(t+1)}, \bx^{(t+1)})}_{\calT^t_1} \
+ \underbrace{\calL(\bp^{(t+1)}, \bx^{(t+1)}) - \calL(\bp^{(t)}, \bx^{(t+1)})}_{\calT^t_2}\right].
\end{aligned}
\]
Then we separately discuss bound $\calT^t_1$ and $\calT^t_2$.
Since most terms of $\sum_t \calT^t_1$ will be
cancelled after summarization, 
we focus the analysis on bounding $\calT^t_2$, which is 
\[
\begin{aligned}
& \calT^t_2 = \la \bp^{(t+1)} - \bp^{(t)}, \V{f}(\bx^{(t+1)})\ra \
\leq \
\|\bp^{(t+1)} - \bp^{(t)}\|_1 \underbrace{\|\V{f}(\bx^{(t+1)})\|_{\infty}}_{\leq R_f},
\end{aligned}
\]
where $\V{f}(\bx^{(t+1)})$ denotes the vector whose 
$i$th element is $f_i(\bx^{(t+1)})$.
Notice that we have
\[
\begin{aligned}
&\|\bp^{(t+1)} - \bp^{(t)}\|_1^2 \
\stackrel{(i)}{\leq} 2D_{KL}\left(\bp^{(t+1)}||\bp^{(t)}\right) \
\stackrel{(ii)}{\leq} \
2\eta^{(t)}_p\la \bp^{(t)} - \bp^{(t+1)},~\V{f}(\bx^{(t)})\ra \\
\leq ~& 2\eta^{(t)}_p \
\|\bp^{(t+1)} - \bp^{(t)}\|_1 \|\V{f}(\bx^{(t)})\|_{\infty}
\leq 2\eta^{(t)}_p R_f \|\bp^{(t+1)} - \bp^{(t)}\|_1, 
\end{aligned}
\] 
which gives us $\|\bp^{(t+1)} - \bp^{(t)}\|_1 \leq 2\eta^{(t)}_p R_f$,
where $(i)$ is because of \emph{Pinsker's inequality} 
(Theorem 4.19 in \cite{boucheron2013concentration}) and 
$(ii)$ is because of \autoref{lemma:noncvxproxsmoothgrad_dl}. 
To conclude, we have upper-bound $\calT^t_2$ as 
\[
\calT^t_2 \leq 2\eta^{(t)}_p R_f^2. 
\]
Then we finish the proof as 
\[
\begin{aligned}
&\sum_t \|\bx^{(t+1)} - \bx^{(t)}\|^2_2 \leq \
\dfrac{2\calL(\bp^{(0)}, \bx^{(0)}) - 2\calL(\bp^{(T+1)}, \bx^{(T+1)})}{L_h} + \
\dfrac{4 R_f^2 \sum_t \eta^{(t)}_p }{L_h} \\
\stackrel{(i)}{\leq} ~&\dfrac{2\calL(\bp^{(0)}, \bx^{(0)})}{L_h} + \
\dfrac{4 R_f^2 \sum_t \eta^{(t)}_p }{L_h}, 
\end{aligned} 
\]	
where $(i)$ is because $\calL(\bp^{(T+1)}, \bx^{(T+1)}) \geq 0$. 
\end{proof}

\begin{lemma}
\label{lemma:noncvxproxsmoothgrad_dl}
With Alg.~\autoref{alg:non_cvx_prox_mw}, we have 
\[
D_{KL}\left(\bp^{(t+1)}||\bp^{(t)}\right) \leq \
\eta^{(t)}_p \la \bp^{(t)} - \bp^{(t+1)},~\V{f}(\bx^{(t)})\ra, 
\]
where $\V{f}(\bx^{(t)})$ denotes the vector whose 
$i$th element is $f_i(\bx^{(t)})$.
\end{lemma}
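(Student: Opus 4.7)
My plan is to use the explicit exponential form of the multiplicative weight update to write $D_{KL}(\bp^{(t+1)}\|\bp^{(t)})$ in closed form, and then bound the normalizing constant via Jensen's inequality.

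From the update equation \eqref{eqn:non_cvx_prox_p_update}, I would write
\[
p_i^{(t+1)} = \frac{p_i^{(t)} e^{-\eta_p^{(t)} f_i(\bx^{(t)})}}{Z_t}, \qquad Z_t \defequal \sum_j p_j^{(t)} e^{-\eta_p^{(t)} f_j(\bx^{(t)})},
\]
so that $\log(p_i^{(t+1)}/p_i^{(t)}) = -\eta_p^{(t)} f_i(\bx^{(t)}) - \log Z_t$. Substituting into the KL divergence gives
\[
D_{KL}(\bp^{(t+1)}\|\bp^{(t)}) = \sum_i p_i^{(t+1)}\log\frac{p_i^{(t+1)}}{p_i^{(t)}} = -\eta_p^{(t)}\la \bp^{(t+1)}, \V{f}(\bx^{(t)})\ra - \log Z_t,
\]
where I used $\sum_i p_i^{(t+1)} = 1$ to pull out $-\log Z_t$.

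Next, I would lower-bound $Z_t$ via Jensen's inequality applied to the convex function $e^{-\eta_p^{(t)} x}$ with weights $\{p_j^{(t)}\}$:
\[
Z_t = \sum_j p_j^{(t)} e^{-\eta_p^{(t)} f_j(\bx^{(t)})} \geq \exp\!\left(-\eta_p^{(t)} \la \bp^{(t)}, \V{f}(\bx^{(t)})\ra \right),
\]
so that $-\log Z_t \leq \eta_p^{(t)} \la \bp^{(t)}, \V{f}(\bx^{(t)})\ra$. Combining the two displays yields
\[
D_{KL}(\bp^{(t+1)}\|\bp^{(t)}) \leq \eta_p^{(t)}\la \bp^{(t)} - \bp^{(t+1)}, \V{f}(\bx^{(t)})\ra,
\]
which is exactly the claim.

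The argument is essentially two lines once the multiplicative update is rewritten in exponential form, so there is no real obstacle; the only point to watch is that Jensen gives the bound in the direction we need ($Z_t$ from below, equivalently $-\log Z_t$ from above), since $e^{-x}$ is convex. No smoothness or boundedness of $f_i$ is required for this lemma — it is a purely algebraic consequence of the exponential weights update, which is why it can serve as the key one-step inequality inside the proof of Theorem \ref{thm:noncvxproxsmoothgrad}.
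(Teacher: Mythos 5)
Your proof is correct and follows essentially the same route as the paper: both compute $D_{KL}(\bp^{(t+1)}\|\bp^{(t)})$ in closed form from the exponential update and then lower-bound the normalizer $Z_t$ by $\exp\bigl(-\eta_p^{(t)}\la \bp^{(t)},\V{f}(\bx^{(t)})\ra\bigr)$. The paper phrases that last step as the weighted AM--GM inequality $\sum_i p_i x_i \geq \prod_i x_i^{p_i}$, which is exactly your Jensen argument for the convex map $x\mapsto e^{-\eta_p^{(t)}x}$ in different notation, so there is no substantive difference.
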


\begin{proof}
Here we have
\[
\begin{aligned}
&D_{KL}(\bp^{(t+1)}||\bp^{(t)})
= \sum_{i}p^{(t+1)}_i\
\log \left(\dfrac{p^{(t+1)}_i}{p^{(t)}_i}\right) \\
=~& \sum_{i}p^{(t+1)}_i\log \dfrac{e^{-\eta^{(t)}_p f_i (x_t)}}{Z_t} 
=-\log\left(Z_t \right) - \eta^{(t)}_p \sum_{i}p^{(t+1)}_i f_i(\bx^{(t)}) \\
=~& - \log\left(Z_t \right) - \
\eta^{(t)}_p \la \bp^{(t)}, ~\V{f}(\bx^{(t)})\ra - \
\eta^{(t)}_p \la \bp^{(t+1)} - \bp^{(t)},~\V{f}(\bx^{(t)})\ra,
\end{aligned}
\]
where $Z_t \triangleq \sum_{i} p^{(t)}_i e^{-\eta_p f_i (\bx^{(t)})}$. 
\par 
Then we have 
\[
\begin{aligned}
&\eta^{(t)}_p \la \bp^{(t)}- \bp^{(t+1)},~\V{f}(\bx^{(t)})\ra = \
D_{KL}(\bp^{(t+1)}||\bp^{(t)}) + \
\log\left(\sum_{i} p^{(t)}_i e^{-\eta^{(t)}_p f_i (\bx^{(t)})}\right) + \
\eta^{(t)}_p \la \bp^{(t)}, f(\bx_t)\ra  \\
\stackrel{(i)}{\geq} ~& D_{KL}(\bp^{(t+1)}||\bp^{(t)})+ \
\log\left[\prod_i e^{-\eta^{(t)}_p p^t_i f_i(\bx^{(t)})}\right] + \
\eta^{(t)}_p \la \bp^{(t)}, \V{f}(\bx^{(t)})\ra \\
=~& D_{KL}(\bp^{(t+1)}||\bp^{(t)}) + \
\underbrace{\sum_{i} \log\left(e^{-\eta^{(t)}_p p^{(t)}_i {f}_i(\bx^{(t)})}\right) + \
\eta^{(t)}_p \la \bp^{(t)}, f(\bx_t)\ra}_{0}  = \
D_{KL}(\bp^{(t+1)}||\bp^{(t)}),
\end{aligned}
\]
where in $(i)$ we use 
$\sum_i p_i x_i \geq \prod_{i}x_i^{p_i}$ such that 
$\sum_{i} p_i = 1, p_i \geq 0$.

\end{proof}

\section{Proof of \autoref{thm:noncvxproxregularsaddle}}


\begin{proof}
Define $\bp^{*}$ and $\bx^{*}$ as 
\begin{equation}
\bp^{*} = \argmin_{\bp}\sum_t \+{LR}\left(\bp, \bx^{(t)}\right),~~~
\bx^{*} = \argmin_{\bx}\sum_t \+{LR}\left(\bp^{(t)}, \bx\right).
\end{equation}
respectively
First we define $\calT^t_1$ and $\calT^t_2$ as
\[
\begin{aligned}
\calT^t_1 &= \+{LR}(\bp^{(t)}, \bx^{(t)}) - \+{LR}(\bp^{*}, \bx^{(t)}); \\
\calT^t_2 &= \+{LR}(\bp^{(t)}, \bx^{(t)}) - \+{LR}(\bp^{(t)}, \bx^{*}),	
\end{aligned}
\]
respectively. Then our goal becomes 
bounding $|\sum_t \calT^t_1| + |\sum_t \calT^t_2|$, 
For term $|\sum_t \calT^t_2|$, the analysis stays the same 
as \autoref{lemma:noncvx_prox_T2bound}. 
Here we focus on bounding $\sum_t \calT_1^t$, which proceeds as 
\[
\begin{aligned}
& \sum_t \calT_1^t = \
\sum_t \left(\+{LR}(\bp^{(t)}, \bx^{(t)}) - \+{LR}(\bp, \bx^{(t)})\right) = \
\sum_t \left\{-\la \underbrace{\nabla_t \+{LR}(\bp^{(t)}, \bx^{(t)})}_{\bg^{(t)}},~\bp - \bp^{(t)}\ra - \
\dfrac{\lambda_3}{2}\|\bp^{(t)} - \bp \|^2_2 \right\} \\
=~& \sum_t \left\{\la \bg^{(t)}, \bp^{(t)} - \bp\ra - \
\dfrac{\lambda_3}{2}\|\bp^{(t)} - \bp \|^2_2 \right\} 
\end{aligned} 
\]
Then we consider the distance $\|\bp^{(t+1)} - \bp\|^2_2$ which is 
\[
\begin{aligned}
&\|\bp^{(t+1)} - \bp\|^2_2 = \
\left\|\Proj_{\Delta}(\bp^{(t)} -\eta_p^t \bg^{(t)}) - \bp\right\|^2_2 \
\stackrel{(i)}{\leq} \
\left\|\bp^{(t)} - \eta_p^{(t)} \bg^{(t)} - \bp\right\|^2_2  \\
= ~& \left\|\bp^{(t)} - \bp\right\|^2_2 + (\eta_p^{(t)})^2\
\|\bg^{(t)}\|^2_2 - \
2\eta_p^{(t)} \la \bg^{(t)},~\bp^{(t)} - \bp\ra, 
\end{aligned}
\]
where in $(i)$ we use the contraction property for 
projection, 
which gives us 
\[
\la \bg^{(t)}, \bp^{(t)} - \bp\ra \leq \dfrac{\eta_p^{(t)} \|\bg^{(t)}\|^2_2}{2} + \
\dfrac{\|\bp^{(t)} - \bp\|^2_2 - \|\bp^{(t+1)} - \bp\|^2_2}{2\eta_p^{(t)}}
\]
By setting $\eta_p^{(t)} = (\lambda t)^{-1}$, we have 
\[
\begin{aligned}
&\sum_{t}\calT_1^t \leq \
\dfrac{R_g^2 \log T}{2\lambda_3} + \
\dfrac{\lambda_3}{2}\sum_{t}  t(\|\bp^{(t)} - \bp\|^2_2 - \|\bp^{(t+1)} - \bp\|^2_2)- \
\dfrac{\lambda_3}{2}\sum_t \|\bp^{(t)} - \bp\|^2_2 \\
=~& \dfrac{R_g^2 \log T}{2\lambda_3} + \
\dfrac{\lambda_3}{2}\sum_t \|\bp^{(t)} - \bp\|^2_2 - \
\dfrac{\lambda_3 (T+1)}{2}\|\bp^{(T+1)} - \bp\|^2_2 - \
\dfrac{\lambda_3}{2}\sum_t \|\bp^{(t)} - \bp\|^2_2 \\
= ~& \dfrac{R_g^2 \log T}{2\lambda_3} - \
\dfrac{\lambda_3 (T+1)}{2}\|\bp^{(T+1)} - \bp\|^2_2  \leq \
\dfrac{R_g^2 \log T}{2\lambda_3}.
\end{aligned}
\]
Hence, we have 
\[
\dfrac{\sum_t \calT_1^t + \calT_2^t}{T} \leq \
\dfrac{R_g^2 \log T}{2\lambda_3 T} + \dfrac{R^2}{2\eta_x T}, 
\]
which completes the proof. 
	
\end{proof}

\section{Proof of \autoref{thm:noncvxproxregularconverge}}
\label{thm_proof:noncvxproxregularconverge}

\begin{proof}
Following the same procedure in 
\autoref{thm_proof:noncvxproxsmoothgrad}, we
can bound 
\[
\begin{aligned}
&\|\bx^{(t+1)} - \bx^{(t)}\|^2_2 \leq \dfrac{2}{L_h}\
\left[\+{LR}(\bp^{(t)}, \bx^{(t)}) - \+{LR}(\bp^{(t)}, \bx^{(t+1)})\right] \\
=~& \
\dfrac{2}{L_h}\left[\underbrace{\+{LR}(\bp^{(t)}, \bx^{(t)}) - \+{LR}(\bp^{(t+1)}, \bx^{(t+1)})}_{\calT^t_1} \
+ \underbrace{\+{LR}(\bp^{(t+1)}, \bx^{(t+1)}) - \+{LR}(\bp^{(t)}, \bx^{(t+1)})}_{\calT^t_2}\right].
\end{aligned}
\]
Since $\calT_1^t$ will cancel themselves after summarization, 
we focus on bounding $\calT_2^t$. 
Then we have 
\[
\begin{aligned}
&\+{LR}(\bp^{(t)}, \bx^{(t+1)}) = 
\+{LR}(\bp^{(t+1)}, \bx^{(t+1)})  \
 + \la \nabla_{\bp} \+{LR}(\bp^{(t+1)}, \bx^{(t+1)}),\
 ~\bp^{(t)} - \bp^{(t+1)}\ra + \
\dfrac{\lambda_3}{2}\|\bp^{(t+1)} - \bp^{(t)}\|^2_2 \\
\stackrel{(i)}{=} ~&\
\+{LR}(\bp^{(t+1)}, \bx^{(t+1)}) + \la \V{f}(\bx^{(t+1)}) + \lambda_3(\bp^{(t+1)} - q),~\bp^{(t)} - \bp^{(t+1)}\ra + \
\dfrac{\lambda_3}{2}\|\bp^{(t+1)} - \bp^{(t)}\|^2_2, 
\end{aligned}
\]
where in $(i)$ we have 
\[
\nabla_{\bp}~\+{LR}(\bp^{(t+1)}, \bx^{(t+1)}) = \
\V{f}(\bx^{(t+1)}) + \lambda_3\left(\bp^{(t+1)} - \V{q}\right).
\]
Then we have 
\[
\begin{aligned}
&\+{LR}(\bp^{(t+1)},\bx^{(t+1)}) - \+{LR}(\bp^{(t)}, \bx^{(t+1)}) = \
\la \bg^{(t+1)},~\bp^{(t)} - \bp^{(t+1)}\ra +\
\dfrac{\lambda_3}{2}\|\bp^{(t+1)} - \bp^{(t)}\|^2_2 \\
\leq ~& \
\|\bg^{(t+1)}\|_2 \left\|\bp^{(t+1)} - \bp^{(t)}\right\|_2 + \
\dfrac{\lambda_3}{2} \left\|\bp^{(t+1)} - \bp^{(t)}\right\|^2_2 \\
\leq ~& \
\|\bg^{(t+1)}\|_2  \|\eta_p^{(t)}\bg^{(t)}\|_2 + \
\frac{\lambda_3 \left(\eta^{(t)}_p\right)^2}{2}\|\bg^{(t)}\|^2_2 \leq \
R_g^2\left(\eta^{(t)}_p + \frac{\lambda_3 \left(\eta^{(t)}_p\right)^2}{2}\right).
\end{aligned} 
\]
Hence, we conclude that 
\[
\sum_t \|\bx^{(t+1)} - \bx^{(t)}\|^2_2 \leq \
\frac{2\+{LR}(\bp^{(0)}, \bx^{(0)})}{L_h} + \dfrac{2 R_g^2}{L_h}\
\sum_{t}\left(\eta^{(t)}_p + \frac{\lambda_3 \left(\eta^{(t)}_p\right)^2}{2}\right), 
\] 
where $\|\bg^{(t)}\|_2 \leq R_g$. 

\end{proof}
	
\end{appendices}

\end{document}